\documentclass[11pt, a4paper, leqno, oneside]{amsart}

\usepackage{pdflatex-maths-global-style}
\usepackage{local-style}

\bibliography{bibliography.bib}

\title{Decompositions of the stable module $\infty$-category}
\date{\today}

\author{Joshua Hunt}
\address{\parbox{\linewidth}{Department of Mathematical Sciences, University of Copenhagen, \newline Universitetsparken 5, 2100 København Ø, Denmark\newline}}
\email{joshuahunt@math.ku.dk}
\thanks{This work was supported by the Danish National Research Foundation through the Centre for Symmetry and Deformation (DNRF92) and the Copenhagen Centre for Geometry and Topology (DNRF151).}

\NewDocumentCommand{\SubgroupDecompositionWithTrivialSubgroup}{mm}{Let $\sC$ be a collection of subgroups of\, $#1$ that is closed under intersection and such that every elementary abelian $p$-subgroup of\, $#1$ is contained in a subgroup in $\sC$. There is an equivalence of symmetric monoidal ∞-categories
\[\StMod_{k#1} \rightshift{\simra}\lim_{#1/#2 \in \orbitC(#1)^\op} \StMod_{k#2}.\]}

\newcommand{\conditionForFactoringThroughPA}{Let $\cA$ and $\cB$ be small ∞-categories, $\cC$ be an ∞-category with all small colimits, and 
\[
\cA \xra{i} \cB \xra{F} \cC
\]
be functors with $i$ fully faithful. Let $\tilde F \colon \cP(\cB) \to \cC$ denote the left Kan extension of $\,F$ along the Yoneda embedding $\yoB$. There is a factorisation
\begin{equation*}
\begin{tikzcd}[ampersand replacement=\&]
\cP(\cB) \arrow[d, "i^*"'] \arrow[r, "\tilde F"] \& \cC \\
\cP(\cA) \arrow[ru, dashed, "\factorisation"']   \&    
\end{tikzcd}
\end{equation*}
if and only if\, $F$ is the left Kan extension of its restriction to $\cA$, \emph{i.e.} the natural map
\begin{equation*}
    \colim(i/b \to \cA \xra{Fi} \cC) \; \to \; F(b)
\end{equation*}
is an equivalence for every $b \in \cB$.}

\newcommand{\CentraliserAndNormaliserDecompositions}{Let $\sC$ be one of the collections $\SpG$, $\ApG$, $\BpG$, $\IpG$, or $\ZpG$. There is a subgroup decomposition
\[\StMod_{kG} \simra \lim_{G/P \in \orbitC(G)^\op} \StMod_{kP}\]
and a normaliser decomposition
\[\StMod_{kG} \simra \lim_{[\sigma] \in \osC(G)} \StMod_{kN_G(\sigma)}.\]
If $\,\sC\!$ is $\SpG$, $\ApG$, or $\ZpG$, then there is additionally a centraliser decomposition
\[\StMod_{kG} \simra \lim_{P \in \fusionC(G)} \StMod_{kC_G(P)}.\]}

\newcommand{\SInvarianceOfStMod}{The right Kan extension of\, $\StMod(-)\colon \orbit(G)^\op \to \lCati$ along the opposite of the Yoneda embedding $\orbit(G) \to \cS_G$ factors through the restriction map \[\cS_G^\op \simeq \cP(\orbit(G))^\op \to \cP(\porbit(G))^\op.\]
In particular, \,$\StMod(-)$ sends $S$-equivalences in $\cS_G$ to equivalences of ∞-categories.}

\begin{document}

\begin{abstract}
    We show that the stable module ∞-category of a finite group $G$ decomposes in three different ways as a limit of the stable module ∞-categories of certain subgroups of $G$. Analogously to Dwyer's terminology for homology decompositions, we call these the centraliser, normaliser, and subgroup decompositions. We construct centraliser and normaliser decompositions and extend the subgroup decomposition (constructed by Mathew) to more collections of subgroups. The key step in the proof is extending the stable module ∞-category to be defined for any $G$-space, then showing that this extension only depends on the $S$-equivariant homotopy type of a $G$-space. The methods used are not specific to the stable module ∞-category, so may also be applicable in other settings where an ∞-category depends functorially on $G$.
\end{abstract}

\pagestyle{normal}
\fancyhead[R]{\ifthenelse{\isodd{\value{page}}}{\thepage}{}}
\fancyhead[L]{\ifthenelse{\isodd{\value{page}}}{}{\thepage}}
\fancyhead[C]{\ifthenelse{\isodd{\value{page}}}{\textsc{Decompositions of the stable module ∞-category}}{\textsc{Joshua Hunt}}}

\maketitle


\sectionspace
\section{Introduction}
\sectionspace

Let $G$ be a finite group and $p$ be a prime dividing the order of $G$. A homology decomposition of the classifying space $\B G$ is a diagram of spaces $F\colon D \to \cS$ such that, for every $d \in D$, the space $F(d)$ has the homotopy type of $\B H$ for some $H \leq G$, together with a map
\[
\hocolim F \to \B G
\]
that induces an isomorphism on mod $p$ homology. Homology decompositions have a long history in algebraic topology, with an early success being their use in \cite{JMO92} to classify self-maps of classifying spaces of compact, connected, simple Lie groups. They also played an important role in the classification of $p$-compact groups; see \cite{grodal10} for a survey. More recently, similar decomposition techniques have found applications in modular representation theory, for example in \cite{mathew16}, \cite{grodal18}, and \cite{BGH}.

In \cite{dwyer97}, Dwyer was able to give a unified treatment of three different types of homology decompositions for a fixed collection $\sC$ of subgroups of $G$ (where by the term \define{collection} we always mean a set of subgroups that is closed under conjugation by elements of $G$):
\begin{description}
    \item[Subgroup] let $D = \orbitC(G)$, the orbit category, and let $F\colon \orbitC(G) \to \cS$ take $G/H$ to $\B H$.
    \item[Centraliser] let $D = \fusionC(G)$, the fusion category, and let $F \colon \fusionC(G)^{\,\op} \to \cS$ take $H$ to $\B C_G(H)$.
    \item[Normaliser] let $D = \osC(G)$, the orbit simplex category, and let $F \colon \osC(G)^\op \to \cS$ take a simplex $\sigma = (H_0 < \ldots < H_n)$ to $\B N_G(\sigma)$, where $N_G(\sigma)$ denotes $\bigcap_{0 \leq i \leq n} N_G(H_i)$.
\end{description}
We recall the definition of the indexing categories below, in \cref{sec:notation-and-conventions}.
Dwyer showed that in all of these cases, $F$ provides a mod $p$ homology decomposition of $G$ if and only if the natural map $\sC_{\h G} \to (*)_{\h G} \simeq \B G$ induces an isomorphism on mod $p$ homology. Dwyer calls a collection $\sC$ satisfying this condition \define{ample}.

The stable module category $\StMod_{kG}$ of $G$ over a field $k$ of characteristic $p$ is obtained by \enquote{quotienting} the module category $\Mod_{kG}$ by the projective modules. In this paper, we show that analogues of the subgroup, centraliser and normaliser decompositions exist for $\StMod_{kG}$, viewed as an ∞-category, describing it in three different ways as a limit of ∞-categories $\StMod_{kH}$ for subgroups $H \leq G$. In this setting, Mathew \cite[Corollary~9.16]{mathew16} has already shown the existence of the subgroup decomposition for certain collections:
\begin{theorem*}[Mathew]
\SubgroupDecompositionWithTrivialSubgroup{G}{H}
\end{theorem*}
\noindent
Note that the change from a homotopy colimit (in Dwyer's result) to a homotopy limit (in Mathew's result) is due to the differing variances of the homotopy orbits functor $(-)_{\h G}$ and the stable module ∞-category functor $\StMod(-)$. Following ideas of Dwyer and others, we use $G$-spaces to encode decompositions of $\StMod(-)$: we formally Kan extend the functor 
\[\StMod(-)\colon \orbit(G)^\op \to \lCati\] 
to a functor defined on any $G$-space 
\begin{equation}
    \label{eq:extended-stmod}
    \StMod(-)\colon \cS_G^\op \to \lCati.
\end{equation}
This extended functor takes small homotopy colimits of $G$-spaces to homotopy limits of ∞-categories, so for any diagram of $G$-spaces $F\colon D \to \cS_G$, the canonical map $\hocolim(F) \to *$ induces a comparison map \begin{equation}
\label{eq:decomposition-for-general-F}
    \StMod_{kG} \simeq \StMod(*) \to \lim_{D^\op} \StMod(F(d)).
\end{equation} Dwyer constructed $G$-spaces (depending on the collection $\sC$) that encode the three homology decompositions listed above. For convenience, we will temporarily refer to these $G$-spaces as \define{encoding $G$-spaces}. Applying $\StMod(-)$ to the encoding $G$-spaces for $\sC$ gives functors as in \labelcref{eq:decomposition-for-general-F} that are candidates for the three decompositions of the stable module ∞-category.

To show that we do get a decomposition of the stable module ∞-category, we need to prove that the comparison functor \labelcref{eq:decomposition-for-general-F} is an equivalence. For this we use work of Grodal--Smith \cite{GS06}, which lists cases when certain canonical maps between the encoding $G$-spaces are $S$-equivalences, where $S$ is a Sylow $p$-subgroup. (Recall that that a map $f\colon X \to Y$ of $G$-spaces is an $S$-equivalence if it induces a homotopy equivalence $X^P \simra Y^P$ on $P$-fixed points for every $p$-subgroup $P \leq G$.) In \cref{sec:stmod-inverts-S-equivalences}, we prove that the extended functor \labelcref{eq:extended-stmod} inverts $S$-equivalences. In fact, we prove a more general result (\cref{prop:condition-for-factoring-through-P(A)}):
\begin{alphatheorem}
\label{prop:introduction's condition-for-factoring-through-P(A)}
\conditionForFactoringThroughPA
\end{alphatheorem}
Let $\porbit(G)$ denote the full subcategory of $\orbit(G)$ consisting of those $G$-sets whose isotropy groups are $p$-groups. From the above criterion, we deduce \cref{prop:S-invariance-of-StMod}:
\begin{alphatheorem}
\label{prop:introduction's S-invariance-of-StMod}
\SInvarianceOfStMod
\end{alphatheorem}
\noindent We note that the only property of $\StMod(-)$ used to deduce \cref{prop:introduction's S-invariance-of-StMod} from \cref{prop:introduction's condition-for-factoring-through-P(A)} is the existence of a subgroup decomposition, as in Mathew's result above. The approach therefore applies whenever an ∞-category depends functorially on $G$ and satisfies an analogous descent condition.

Given \cref{prop:introduction's S-invariance-of-StMod}, it is enough to find a zig-zag of $S$-equivalences from the encoding $G$-space for Mathew's subgroup decomposition to the encoding $G$-space for the decomposition that we are interested in. This problem was studied by Grodal--Smith in \cite{GS06} and we use their results in \cref{sec:zig-zags-of-S-equivalences} to obtain our main theorem (\cref{prop:centraliser-and-normaliser-decompositions}):
\begin{alphatheorem}
\CentraliserAndNormaliserDecompositions
\end{alphatheorem}

The collections mentioned in the theorem are defined as follows:
\begin{enumerate}
    \item $\SpG$ is the collection of non-trivial $p$-subgroups of $G$,
    \item $\ApG$ is the collection of non-trivial elementary abelian $p$-subgroups,
    \item $\BpG$ is the collection of non-trivial $p$-radical subgroups, \emph{i.e.} non-trivial $p$-subgroups $P \leq G$ such that $P$ is the maximal normal $p$-subgroup in $N_G(P)$,
    \item $\IpG$ is the collection of all non-trivial $p$-subgroups that are the intersection of a set of Sylow $p$-subgroups, and
    \item $\ZpG$ is the subcollection of $\ApG$ consisting of those $V$ such that $V$ is the set of elements in the centre of $C_G(V)$ whose order divides $p$, \emph{i.e.} such that $V = \Omega_1O_pZ(C_G(V))$, using standard group-theoretic notation such as found in \cite{aschbacher00}.
\end{enumerate}

\begin{remark}
Neither $\BpG$ nor $\ZpG$ are closed under intersections, so the subgroup decompositions for $\BpG$ and $\ZpG$ in \cref{prop:centraliser-and-normaliser-decompositions} are new and do not follow immediately from Mathew's subgroup decomposition. For example, if $G = \operatorname{PSL}_3(7)$, then there are rank two elementary abelian subgroups \[\begin{pmatrix} 1 & * & * \\
                         & 1 & 0 \\
                         &  & 1 
        \end{pmatrix} \quad \text{and} \quad
        \begin{pmatrix} 1 & 0 & * \\
                         & 1 & * \\
                         &  & 1
        \end{pmatrix}\]
that are contained in both $\BpG$ and $\ZpG$, but whose intersection
\[\begin{pmatrix} 1 & 0 & * \\
                         & 1 & 0 \\
                         &  & 1
        \end{pmatrix}\]
is contained in neither collection.
\end{remark}

\begin{acknowledgements}
This work was supported by the Danish National Research Foundation through the Centre for Symmetry and Deformation (DNRF92) and the Copenhagen Centre for Geometry and Topology (DNRF151). I would like to thank Markus Land and Piotr Pstrągowski for useful conversations about the contents of this paper (in particular, Markus suggested a way to prove \cref{prop:condition-for-factoring-through-P(A)}); my supervisor, Jesper Grodal, for suggesting the problem and his advice and encouragement; and Kaif Hilman, for catching a mistake in a previous draft as well as many other useful comments.
\end{acknowledgements}

\sectionspace
\section{Notation, conventions, and background}
\sectionspace
\label{sec:notation-and-conventions}
Throughout the paper, $G$ will refer to a finite group and $p$ will be a fixed prime dividing the order of $G$. We let $k$ be a field of characteristic $p$. A \define{collection} of subgroups of $G$ is a set of subgroups that is closed under conjugation by elements of $G$.

We let $\orbit(G)$ denote the \define{orbit category} of $G$, whose objects are transitive left $G$-sets and whose morphisms are $G$-equivariant maps between them. For any collection $\sC$ of subgroups of $G$, we let $\orbitC(G)$ denote the full subcategory of $\orbit(G)$ on those $G$-sets whose isotropy subgroups are contained in $\sC$. Note that every object of $\orbit(G)$ is isomorphic to a $G$-set of the form $G/H$, with $H$ a subgroup of $G$, and that $G/H$ lies in $\orbitC(G)$ if and only if $H \in \sC$. With this identification, the morphisms in the orbit category are related to subconjugation relations between subgroups:
\[
\Hom_{\orbit(G)}(G/H, G/K) \: \cong \: \{\,g \in G : H^g \leq K\,\}/K.
\]
When $\sC$ is the collection of all non-trivial $p$-subgroups of $G$, we will use the notation $\orbitS(G)$ instead of $\orbitC(G)$. When we wish to additionally include the trivial subgroup in a collection, we will add the superscript \enquote{$e$}, for example writing $\porbit(G)$. (In other parts of the literature, a superscript \enquote{$*$} was used to indicate the \emph{removal} of the trivial subgroup, a convention we do not use here.)

We let $\fusionC(G)$ denote the \define{fusion category} of $G$, whose objects are the subgroups in $\sC$ and whose morphisms are homomorphisms that are induced by conjugation by an element of $G$.

We let $\osC(G)$ denote the \define{orbit simplex category} of $G$, which is the poset of $G$-conjugacy classes of non-empty chains $\sigma = (H_0 < \ldots < H_n)$ of subgroups in $\sC$, ordered by refinement: that is, we have $[\sigma] \leq [\tau]$ if we can find representatives $\sigma$ and $\tau$ for the conjugacy classes such that $\sigma \subseteq \tau$. The objects of $\osC(G)$ identify with the $G$-conjugacy classes of non-degenerate simplices of the nerve of $\sC$.

Since we deal exclusively with homotopy (co)limits, we will drop the adjective \enquote{homotopy} here: when we refer to a \enquote{colimit of $G$-spaces} we will implicitly mean a homotopy colimit. We follow Lurie's convention of using the prefix \enquote{∞-} instead of \enquote{$(\text{∞},1)$-}. We use $\lCati$ to denote the ∞-category of large ∞-categories, which has all large limits (though we will only need the existence of small limits). Let $\FunColim(\cC, \cD)$ denote the full subcategory of $\Fun(\cC, \cD)$ spanned by the functors that preserve small colimits.

The \define{stable module ∞-category} $\StMod_{kG}$ is defined as the localisation of the module category $\Mod_{kG}$ at the \define{stable equivalences}, \emph{i.e.} at those maps $f\colon M \to N$ and $g\colon N \to M$ such that $fg - \id_N$ and $gf - \id_M$ both factor through a projective module. The homotopy category of the stable module ∞-category has been studied by representation theorists: for example, Benson--Iyengar--Krause \cite{BIK11} use it as a way of classifying $kG$-modules when $\Mod_{kG}$ has wild representation type. The objects of the homotopy category are $kG$-modules and the hom sets are given by 
\[
\pi_0 \Map_G(M, N) \cong \Hom_G(M, N)/(f \sim 0\text{ if $f$ factors through a projective}).
\] 
The stable module ∞-category is a presentable, stable, symmetric monoidal ∞-category. See \cite[Section~5]{carlson96} for a discussion of the homotopy category and \cite[Definition~2.2]{mathew15} for a construction of $\StMod_{kG}$ as an ∞-category.

In \cite[Section~9.5]{mathew16}, Mathew constructs a functor \[\StMod(-)\colon \orbit(G)^\op \to \CAlg(\PrLst)\] whose value on $G/H$ is equivalent to $\StMod_{kH}$. Here $\PrLst$ denotes the ∞-category of presentable, stable ∞-categories and left adjoint functors between them. This functor will play a crucial role for us, so we spend the rest of this section describing its construction. For any $H \leq G$, we have a symmetric monoidal restriction functor $\Res_H^G\colon \StMod_{kG} \to \StMod_{kH},$ whose right adjoint $\CoInd_H^G\colon \StMod_{kH} \to \StMod_{kG}$ is consequently lax symmetric monoidal. This implies that $\CoInd_H^G(k)$ is a commutative algebra object of $\StMod_{kG}$, which we will denote $A^G_H$. The underlying module of $A^G_H$ is $\prod_{G/H} k$ with its permutation action. 

In \cite[Theorem~1.2]{balmer15}, Balmer proves that the homotopy category of $\StMod_{kH}$ is equivalent to the category of modules over $A^G_H$ internal to the homotopy category of $\StMod_{kG}$. Proposition~9.12 of \cite{mathew16} generalises this result to the ∞-category $\StMod_{kH}$:
\begin{theorem}[Balmer, Mathew]\label{prop:restriction-as-base-change}
There is a natural symmetric monoidal equivalence 
\[
\StMod_{kH} \simra \Mod_{\StMod_{kG}}(A^G_H)
\]
induced by coinduction, under which the free/forget adjunction $\StMod_{kG} \fallbackleftrightarrows \Mod_{\StMod_{kG}}(A^G_H)$ corresponds to the restriction/coinduction adjunction $\StMod_{kG} \fallbackleftrightarrows \StMod_{kH}$.
\end{theorem}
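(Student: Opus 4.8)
The plan is to realise this equivalence as an instance of the Barr--Beck--Lurie monadicity theorem applied to the restriction/coinduction adjunction $\Res_H^G \dashv \CoInd_H^G$ between stable module ∞-categories. Two inputs are needed. First, since $\Res_H^G$ is symmetric monoidal, $\CoInd_H^G$ is lax symmetric monoidal, so $A^G_H = \CoInd_H^G(k)$ is a commutative algebra and, for each $M \in \StMod_{kG}$, the object $\CoInd_H^G\Res_H^G(M)$ acquires a natural $A^G_H$-module structure; the projection-formula map $A^G_H \otimes M \to \CoInd_H^G\Res_H^G(M)$ is an equivalence, for which it suffices that $\Res_H^G$ preserves colimits. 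Promoting this to an equivalence of \emph{monads} on $\StMod_{kG}$ — identifying the adjunction monad $\CoInd_H^G\Res_H^G$ with the free-$A^G_H$-module monad $A^G_H \otimes (-)$ — is where most of the coherence bookkeeping lies, and is the step I expect to be the main obstacle. Second, $\CoInd_H^G$ preserves all small colimits: since $[G:H]$ is finite, on underlying $k$-modules $\CoInd_H^G$ is a finite product of copies of the identity, hence agrees with a finite coproduct.

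Granting these, I would apply the Barr--Beck--Lurie theorem to $\Res_H^G \dashv \CoInd_H^G$. It produces a comparison functor $\StMod_{kH} \to \Mod_{\StMod_{kG}}(A^G_H)$, sending a $kH$-module $N$ to $\CoInd_H^G(N)$ equipped with its $A^G_H$-module structure, and declares it an equivalence provided $\CoInd_H^G$ is conservative and $\StMod_{kH}$ admits, and $\CoInd_H^G$ preserves, geometric realisations of $\CoInd_H^G$-split simplicial objects. The second condition is immediate from the colimit-preservation above together with presentability of $\StMod_{kH}$. For conservativity, the coinduction analogue of the Mackey double-coset formula exhibits $N$ as a retract of $\Res_H^G\CoInd_H^G(N)$ — via the summand indexed by the trivial double coset — so $\CoInd_H^G$ detects zero objects; being exact between stable ∞-categories, it is therefore conservative. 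The comparison functor is built so that composing with the forgetful functor recovers $\CoInd_H^G$ and precomposing with $\Res_H^G$ recovers the free functor $M \mapsto A^G_H \otimes M$ (again by the projection formula), which is exactly the claimed identification of the free/forget adjunction with restriction/coinduction.

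The remaining steps are to make the equivalence symmetric monoidal and to address naturality. Since $\Res_H^G$ is symmetric monoidal and the monad at hand is the free-module monad of the commutative algebra $A^G_H$, the symmetric monoidal refinement of Barr--Beck--Lurie upgrades the comparison functor to a symmetric monoidal equivalence; this again rests on the monoidal coherence of the monad identification flagged above. Naturality in $H$ — compatibility with a further inclusion $K \leq H$ — would follow from uniqueness of adjoints and the transitivity equivalences $\Res_K^H\Res_H^G \simeq \Res_K^G$ and $\CoInd_H^G\CoInd_K^H \simeq \CoInd_K^G$, which also provide the base-change identification comparing the ambient module ∞-categories. This is in essence the route of Balmer \cite{balmer15} and Mathew \cite{mathew16}, whose references one may consult for the full coherences.
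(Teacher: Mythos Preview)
The paper does not supply its own proof of this statement: it is quoted as a theorem of Balmer \cite{balmer15} (for the homotopy category) and Mathew \cite[Proposition~9.12]{mathew16} (for the ∞-categorical refinement), and the reader is referred to those sources. There is therefore nothing in the paper to compare your argument against.

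That said, your outline is the standard one and is essentially what Mathew does: apply Barr--Beck--Lurie to $\Res_H^G \dashv \CoInd_H^G$, using finite index to see that $\CoInd_H^G$ preserves colimits, the Mackey double-coset decomposition to see it is conservative, and the projection formula to identify the resulting monad with $A^G_H \otimes (-)$. One small quibble: you justify the projection formula by saying ``it suffices that $\Res_H^G$ preserves colimits,'' but the relevant input is rather that $\CoInd_H^G$ preserves colimits (so that both sides of $A^G_H \otimes M \to \CoInd_H^G\Res_H^G M$ are colimit-preserving in $M$ and one may check on generators). You have this fact anyway, so the argument goes through. Your caveat about the monoidal coherence in identifying the monads is well-placed; this is exactly the point handled carefully in \cite[Section~5]{MNN17}, to which Mathew also appeals.
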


We have a functor
\begin{IEEEeqnarray*}{CCC}
\orbit(G)^\op & \to & \CAlg(\StMod_{kG}) \\
G/H & \mapsto & A^G_H
\end{IEEEeqnarray*}
that on underlying modules sends a morphism $G/H \to G/K$ to the pullback map $\prod_{G/K} k \to \prod_{G/H} k$.
This functor can be constructed by composing the analogous functor $\orbit(G)^\op \to \CAlg(\Mod_{kG})$ with the localisation functor $\Mod_{kG} \to \StMod_{kG}$.
\cref{prop:restriction-as-base-change} implies that we obtain a functor 
\label{sec:stmod-is-functor-from-orbit-category}
\[
\StMod(-)\colon \orbit(G)^\op \to \lCati
\]
that takes $G/H$ to an ∞-category equivalent to $\StMod_{kH}$. By using the description given in \cite[Construction~5.23]{MNN17} of the inverse to the equivalence in \cref{prop:restriction-as-base-change}, one can check that a morphism $G/H \to G/K$ in $\orbit(G)$ is sent to the restriction functor $\StMod_{kK} \to \StMod_{kH}$. Since both $\StMod_{kH}$ and $\Res^K_H\colon \StMod_{kK} \to \StMod_{kH}$ lie in $\CAlg(\PrLst)$, we have constructed a functor \[\orbit(G)^\op \to \CAlg(\PrLst)\] as desired.

\sectionspace
\section{The stable module \texorpdfstring{∞}{infinity}-category of a \texorpdfstring{$G$}{G}-space}
\sectionspace

Let $\cS$ denote the ∞-category of small spaces. Recall that the ∞-category $\cS_G$ of small $G$-spaces is equivalent to $\Fun(\orbit(G)^\op, \cS)$ and the Yoneda embedding $\yo\colon \orbit(G) \to \cS_G$ identifies with the inclusion of $\orbit(G)$ as the transitive, discrete $G$-spaces.

The stable module ∞-category determines a functor $\StMod(-) \colon \orbit(G)^\op \to \lCati$ that takes $G/H$ to $\StMod_{kH}$. We can formally extend this functor to $G$-spaces by right Kan extension along the opposite of the Yoneda embedding:
\begin{equation*}
\begin{tikzcd}
\orbit(G)^\op \arrow[d, "\yo"'] \arrow[r] & \lCati \\
\cS_G^\op \arrow[ru, dashed, "\StMod(-)"']    &      
\end{tikzcd}
\end{equation*}
Since $\orbit(G)$ is small, the existence of small limits in $\lCati$ guarantees the existence of the Kan extension, which we still denote by $\StMod(-)$. It sends small (homotopy) colimits of $G$-spaces to limits in $\lCati$.

\begin{remark}
We could equally well have taken $\StMod(-)$ to be a functor to $\CAlg(\PrLst)$ while carrying out the above construction, since $\CAlg(\PrLst)$ also has small limits and the composition $\CAlg(\PrLst) \to \PrLst \to \lCati$ preserves limits \cite[Corollary~3.2.2.4]{HA}. In other words, the \enquote{stable module ∞-category of a $G$-space} is presentable, stable, and has a symmetric monoidal structure that preserves finite colimits in each variable. We will not need these facts, but mention them to point out that the extended definition of the stable module ∞-category shares many features with the standard definition.

\end{remark}

\sectionspace
\section{Factorising Kan extensions}
\sectionspace
\label{sec:the-general-case}

Our first goal is to show that $\StMod(-)$ only sees the $S$-equivariant homotopy type of a $G$-space: that is, we have a factorisation
\begin{equation*}
\begin{tikzcd}
\cS_G^\op \simeq \cP(\orbit(G))^\op \arrow[d, "i^*"] \arrow[r]    &  \lCati\\
\phantom{\cS_G^\op = }\cP(\porbit(G))^\op \arrow[ru, dashed] &
\end{tikzcd}
\end{equation*}
where $i\colon \porbit(G) \hookrightarrow \orbit(G)$ is the inclusion of the full subcategory of transitive $G$-sets with $p$-group isotropy and $\cP(\cA)$ is the ∞-category $\Fun(\cA^\op, \cS)$ of presheaves on $\cA$.

For simplicity of notation, we dualise and consider the following question:
\begin{question}
\label{question:factorisation-through-P(A)} Let $\cA$ and $\cB$ be small ∞-categories, $\cC$ be an ∞-category with all small colimits, and 
\[\cA \xra{i} \cB \xra{F} \cC\]
be functors with $i$ fully faithful. 
When does the left Kan extension $\tilde F$ of $F$ along the Yoneda embedding $\yoB$ admit a factorisation through $\cP(\cA)$ as indicated in the diagram below?
\begin{equation*}
\begin{tikzcd}
\cB \arrow[d, "\yoB"'] \arrow[dr, "F"] & \\
\cP(\cB) \arrow[d, "i^*"'] \arrow[r, "\tilde F", near start] & \cC \\
\cP(\cA) \arrow[ru, dashed, "\factorisation"'] &
\end{tikzcd}
\end{equation*}
\end{question}

We answer this question in \cref{prop:condition-for-factoring-through-P(A)}, providing a necessary and sufficient condition on $F$ for such a factorisation $\factorisation$ to exist: namely, $F$ must be equivalent to the left Kan extension of $Fi$ along $i$.

\begin{notation}
\label{notation:i-and-j}
In this situation, there are two functors that are induced by restriction along $i$ (or its opposite) and hence could reasonably be denoted by $i^*$, namely
\[
\cP(\cB) \to \cP(\cA) \text{\quad and \quad} \Fun(\cB, \cC) \to \Fun(\cA, \cC).
\]
We will denote the first functor by $i^*$ and the second functor instead by $\jpb$. We hope that this prevents more confusion than it causes. Both of these functors have left adjoints given by left Kan extension, 
which we will write as
\[
i_!\colon \cP(\cA) \to \cP(\cB) \text{\quad and \quad} \jshr\colon \Fun(\cA, \cC) \to \Fun(\cB, \cC).
\]
We will similarly use $(\yoB)_!$ to denote left Kan extension along the Yoneda embedding. 
The adjunction $i_! \dashv i^*$ induces an adjunction 
\begin{equation*}
\begin{tikzcd}
\Fun(\cP(\cA),\cC) \arrow[d, shift right=0.25cm, "(i^*)^*"'] \arrow[d, phantom, "\dashv"]\\
\Fun(\cP(\cB), \cC). \arrow[u, shift right=0.25cm, "(i_!)^*"']
\end{tikzcd}
\end{equation*}
\end{notation}

\begin{remark}
\label{rem:outline-of-factorisation-proof}
We briefly summarise the proof of \cref{prop:condition-for-factoring-through-P(A)}, making forward reference to lemmas that we will prove later in the \lcnamecref{sec:the-general-case}. We will show that the following statements are all equivalent:
\begin{enumerate}
    \item $\tilde F$ factors through $i^*$.
    \item The natural transformation $\tilde F i_! i^* \to \tilde F$ induced by the counit of the $i_! \dashv i^*$ adjunction is an equivalence.
    \item $\tilde F$ is equivalent, via the counit of the $(i^*)^* \dashv (i_!)^*$ adjunction, to the composition $(i^*)^* \circ (i_!)^* \circ (\yoB)_!$ applied to $F$.
    \item $\tilde F$ is equivalent, via the counit of the $\jshr \dashv \jpb$ adjunction, to the composition $(\yoB)_! \circ \jshr \circ \jpb$ applied to $F$.
    \item The natural transformation $\jshr \jpb F \to F$ induced by the counit of the $\jshr \dashv \jpb$ adjunction is an equivalence.
\end{enumerate}

The equivalence of (i) and (ii) is \cref{prop:first-factorisation-condition}. Since $\tilde F$ is the left Kan extension of $F$ along $\yoB$, we see that (iii) is just a rewriting of (ii) with different notation. \cref{prop:composing-lans,prop:second-Kan-lemma} show that (iii) is equivalent to (iv). Checking that the natural transformation in (iv) is still induced by the counit is a straightforward but tiring diagram chase that we include in \cref{sec:its-a-counit}. 
Finally, \cite[Theorem~5.1.5.6]{HTT} shows that restriction along the Yoneda embedding induces an equivalence 
\[
\FunColim(\cP(\cB), \cC) \simra \Fun(\cB, \cC),
\]
so (iv) is equivalent to (v).
\end{remark}

The rest of the \lcnamecref{sec:the-general-case} fills in the details omitted in \cref{rem:outline-of-factorisation-proof}. We begin by showing that if $\tilde F$ does factor through $i^*$, then such a factorisation is unique.

\begin{lemma} 
\label{prop:first-factorisation-condition}
Let $H\colon \cP(\cB) \to \cC$ be a functor. Any functor $\overline{H}\colon \cP(\cA) \to \cC$ that satisfies $H \simeq \overline{H} \circ i^*$ must be given by $\overline{H} \simeq H \circ i_!$. Furthermore, $H$ factors through $i^*$ if and only if the natural transformation 
\begin{equation*}
\label{eq:tilde-F-epsilon}
H \varepsilon \colon H i_! i^* \to H
\end{equation*}
induced by the counit of the $i_! \dashv i^*$ adjunction is an equivalence in $\Fun(\cP(\cB), \cC)$.
\end{lemma}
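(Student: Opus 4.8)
The plan is to reduce everything to the full faithfulness of $i$, which is equivalent to the unit $\eta\colon \id_{\cP(\cA)} \to i^* i_!$ of the adjunction $i_! \dashv i^*$ being an equivalence, since left Kan extension along a fully faithful functor is fully faithful. The first thing I would note is that the whiskered counit $i^*\varepsilon\colon i^* i_! i^* \to i^*$ is then also an equivalence: one triangle identity for the adjunction reads $(i^*\varepsilon)\circ(\eta i^*) = \id_{i^*}$, and as $\eta$, hence $\eta i^*$, is an equivalence, $i^*\varepsilon$ must be its inverse and so an equivalence.

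Next I would dispatch the uniqueness clause. If $\overline{H}\colon \cP(\cA) \to \cC$ satisfies $H \simeq \overline{H} \circ i^*$, then precomposing with $i_!$ and using $i^* i_! \simeq \id_{\cP(\cA)}$ gives $H \circ i_! \simeq \overline{H} \circ i^* \circ i_! \simeq \overline{H}$. The easy half of the biconditional follows at once: if $H\varepsilon\colon H i_! i^* \to H$ is an equivalence, then $H \simeq (H \circ i_!)\circ i^*$ is a factorisation of $H$ through $i^*$.

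For the converse, assume $H$ factors through $i^*$ via a natural equivalence $\alpha\colon H \xrightarrow{\sim} \overline{H} \circ i^*$. The interchange law, applied to $\alpha$ and to the natural transformation $\varepsilon\colon i_! i^* \to \id_{\cP(\cB)}$, produces a commuting square whose horizontal maps are $\alpha i_! i^*$ and $\alpha$ and whose vertical maps are $H\varepsilon$ and $(\overline{H} \circ i^*)\varepsilon = \overline{H}(i^*\varepsilon)$. Since $\alpha$ and $\alpha i_! i^*$ are equivalences and $i^*\varepsilon$ is an equivalence by the first paragraph, the map $\overline{H}(i^*\varepsilon)$ is an equivalence; chasing the square then exhibits $H\varepsilon$ as a composite of equivalences, hence an equivalence.

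The only real subtlety is this converse: it is not enough that $H$ and $H i_! i^*$ be abstractly equivalent, because the statement demands that the canonical comparison map $H\varepsilon$ induced by the counit be an equivalence, and the interchange argument is exactly what upgrades the abstract equivalence to the canonical one. Everything else is formal bookkeeping with the adjunction $i_! \dashv i^*$ and the full faithfulness of $i$.
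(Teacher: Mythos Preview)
Your proof is correct and follows essentially the same approach as the paper's: both use that full faithfulness of $i$ makes the unit $\eta\colon \id \to i^*i_!$ an equivalence to get the uniqueness clause, and both invoke naturality of $\alpha$ together with the triangle identities to verify that $H\varepsilon$ is an equivalence. You have simply spelled out in detail what the paper summarises as ``a straightforward check using naturality of the equivalence $H \simeq \overline{H} \circ i^*$ and the triangle identities for $i_! \dashv i^*$.''
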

\begin{proof}
Since $i$ is fully faithful, the unit of the adjunction induces an equivalence $\id \simra i^*i_!$. Therefore, $H i_! \simeq \overline{H} i^* i_! \simeq \overline{H}$. The second claim is a straightforward check using naturality of the equivalence $H \simeq \overline{H} \circ i^*$ and the triangle identities for $i_! \dashv i^*$.
\end{proof}

The condition in \cref{prop:first-factorisation-condition} for a factorisation to exist is in terms of $\tilde F$, so our next goal is to rewrite this as a condition on $F$. We will need two lemmas regarding properties of Kan extensions.

\begin{lemma}
\label{prop:composing-lans}
There is a canonical equivalence
\[
Fi \simra \tilde F i_! \yoA,
\]
which by the universal property of left Kan extension induces a natural transformation
\[
\Lan_{\yoA}(F i) \to \tilde F i_!
\]
as functors $\cP(\cA) \to \cC$. This natural transformation is an equivalence; that is, 
\[
(\yoA)_! \circ \jpb \simra (i_!)^* \circ (\yoB)_!
\]
as functors $\Fun(\cB, \cC) \to \FunColim(\cP(\cA), \cC)$.
\end{lemma}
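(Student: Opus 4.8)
The plan is to exhibit the equivalence $Fi \simra \tilde F i_! \yoA$ concretely and then invoke the universal properties of the three Kan extensions involved. First I would observe that since $\tilde F = \Lan_{\yoB} F$ and $i$ is fully faithful, the Yoneda embeddings are compatible: there is a canonical equivalence $\yoB \circ i \simeq i_! \circ \yoA$ (the left Kan extension of a representable along $i$ is again representable, as $i_! = \Lan_{i}$ on presheaves sends $\yoA(a)$ to $\yoB(ia)$ — this is a standard consequence of the pointwise formula together with full faithfulness of $i$). Composing with $\tilde F$ and using the defining equivalence $\tilde F \circ \yoB \simeq F$ gives
\[
\tilde F i_! \yoA \simeq \tilde F \yoB i \simeq F i,
\]
which is the asserted canonical equivalence. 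By the universal property of $\Lan_{\yoA}$, this equivalence $Fi \simra \tilde F i_! \yoA$ corresponds to a natural transformation $\Lan_{\yoA}(Fi) \to \tilde F i_!$, and the claim is that this is an equivalence.

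For that, I would use the characterisation of presheaf categories: by \cite[Theorem~5.1.5.6]{HTT}, a colimit-preserving functor out of $\cP(\cA)$ is determined by its restriction along $\yoA$, and any such restriction extends uniquely. The functor $\Lan_{\yoA}(Fi)$ preserves small colimits by construction. The composite $\tilde F \circ i_!$ also preserves small colimits, being a composite of two left adjoints (or: $i_!$ preserves colimits as a left adjoint, and $\tilde F$ preserves small colimits since it is a left Kan extension along the Yoneda embedding, hence lies in $\FunColim(\cP(\cB),\cC)$). Thus both $\Lan_{\yoA}(Fi)$ and $\tilde F i_!$ are objects of $\FunColim(\cP(\cA),\cC)$, and we have just produced a natural transformation between them whose restriction along $\yoA$ is the equivalence $Fi \simra \tilde F i_! \yoA$. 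Since restriction along $\yoA$ is an equivalence $\FunColim(\cP(\cA),\cC) \simra \Fun(\cA,\cC)$ and in particular conservative, the natural transformation $\Lan_{\yoA}(Fi) \to \tilde F i_!$ is itself an equivalence. Rewriting $\Lan_{\yoA}(Fi) = (\yoA)_! \jpb F$ and $\tilde F i_! = (i_!)^* (\yoB)_! F$, and noting this is natural in $F$, gives the stated equivalence of functors $\Fun(\cB,\cC) \to \FunColim(\cP(\cA),\cC)$.

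The main obstacle is the bookkeeping of the first step: verifying that the equivalence $\yoB i \simeq i_! \yoA$ is genuinely canonical and that the resulting map $\Lan_{\yoA}(Fi) \to \tilde F i_!$ is the one induced by the universal property (rather than some other map agreeing with it only up to unspecified homotopy). This is where one must be careful to track the (co)units coherently, but it does not require any computation beyond the pointwise colimit formula for left Kan extensions and full faithfulness of $i$; everything else is a formal consequence of \cite[Theorem~5.1.5.6]{HTT}. I would relegate the detailed coherence check to a remark or a short paragraph, since the essential mathematical content is the conservativity of restriction along the Yoneda embedding.
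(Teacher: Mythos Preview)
Your proposal is correct and follows essentially the same route as the paper: establish $Fi \simeq \tilde F i_! \yoA$ via the commutative square $i_!\yoA \simeq \yoB i$ together with $\tilde F \yoB \simeq F$, then conclude by observing that both sides preserve small colimits and invoking \cite[Theorem~5.1.5.6]{HTT} to reduce to the restriction along $\yoA$. One small remark: the identification $i_!\yoA \simeq \yoB i$ does not actually require $i$ to be fully faithful (it follows from the adjunction $i_! \dashv i^*$ and the Yoneda lemma alone), though this does not affect your argument.
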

\begin{proof}
Since $\yoB$ is fully faithful, we have a commutative diagram
\begin{equation*}
\begin{tikzcd}
\cA \arrow[r, "i"] \arrow[d, "\yoA"'] & \cB \arrow[r, "F"] \arrow[d, "\yoB"'] & \cC \\
\cP(\cA) \arrow[r, "i_!"] & \cP(\cB) \arrow[ur, "\tilde F \coloneqq \Lan_{\yoB}(F)"']
\end{tikzcd}
\end{equation*}
that gives rise to the first equivalence.

The three functors $\tilde F$, $i_!$, and $\Lan_{\yoA}(F i)$ all preserve small colimits, so it is enough to check that they restrict along the Yoneda embedding $\yoA$ to equivalent functors in $\Fun(\cA, \cC)$, by \cite[Theorem~5.1.5.6]{HTT}. We then observe that $\Lan_{\yoA}(Fi)$ restricts to $Fi$, because $\yoA$ is also fully faithful.
\end{proof}

\begin{lemma}
\label{prop:second-Kan-lemma}
Let $H\colon \cA \to \cC$ be a functor. There is a natural equivalence
\[
\Lan_{\yoA}(H) \circ i^* \simeq \Lan_{\yoB i}(H)
\]
as functors $\cP(\cB) \to \cC$. That is,
\[
(i^*)^* \circ (\yoA)_! \simeq (\yoB)_! \circ \jshr
\]
as functors $\Fun(\cA, \cC) \to \FunColim(\cP(\cB), \cC)$.
\end{lemma}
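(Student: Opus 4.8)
The plan is to prove the reformulated statement $(i^*)^* \circ (\yoA)_! \simeq (\yoB)_! \circ \jshr$ by exhibiting both sides as left adjoints and checking that their right adjoints agree; the first form of the statement then follows by transitivity of left Kan extensions, since $\Lan_{\yoB i}(H) \simeq \Lan_{\yoB}(\Lan_i(H)) = (\yoB)_!(\jshr H)$.

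First I would record two preliminaries. The restriction functor $i^* \colon \cP(\cB) \to \cP(\cA)$ preserves small colimits, since colimits of presheaves are computed objectwise; consequently $(i^*)^*$ carries $\FunColim(\cP(\cA), \cC)$ into $\FunColim(\cP(\cB), \cC)$, and $(i_!)^*$ carries $\FunColim(\cP(\cB), \cC)$ into $\FunColim(\cP(\cA), \cC)$ because $i_!$, being a left adjoint, also preserves small colimits. Hence the adjunction $(i^*)^* \dashv (i_!)^*$ of \cref{notation:i-and-j} restricts to an adjunction between the corresponding $\FunColim$-subcategories. Second, I would recall from (the proof of) \cref{prop:composing-lans} the canonical equivalence $i_! \circ \yoA \simeq \yoB \circ i$.

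Now both $(i^*)^* \circ (\yoA)_!$ and $(\yoB)_! \circ \jshr$ are composites of left adjoints: by \cite[Theorem~5.1.5.6]{HTT} the functors $(\yoA)_!$ and $(\yoB)_!$ are equivalences $\Fun(\cA, \cC) \simra \FunColim(\cP(\cA), \cC)$ and $\Fun(\cB, \cC) \simra \FunColim(\cP(\cB), \cC)$, with right adjoints given by restriction along $\yoA$ and $\yoB$; the functor $\jshr$ is left adjoint to $\jpb$; and $(i^*)^*$ is left adjoint to $(i_!)^*$ by the previous paragraph. Therefore each composite is a left adjoint, and its right adjoint is the composite of the individual right adjoints in the reverse order. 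The right adjoint of $(i^*)^* \circ (\yoA)_!$ sends $K \colon \cP(\cB) \to \cC$ to $(K \circ i_!) \circ \yoA = K \circ (i_! \circ \yoA) \simeq K \circ \yoB \circ i$, while the right adjoint of $(\yoB)_! \circ \jshr$ sends $K$ to $(K \circ \yoB) \circ i = K \circ \yoB \circ i$. These agree, so by uniqueness of left adjoints the two composites are equivalent, naturally in $H$.

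I expect the only real bookkeeping to be the first step: verifying that every functor and adjunction in sight restricts to the subcategories of colimit-preserving functors, which comes down to $i^*$ and $i_!$ both preserving small colimits, together with tracking the identification $i_! \yoA \simeq \yoB i$; after that the argument is formal. (Alternatively, one can argue directly in the style of \cref{prop:composing-lans}: both $\Lan_{\yoA}(H) \circ i^*$ and $\Lan_{\yoB i}(H)$ preserve small colimits, so by \cite[Theorem~5.1.5.6]{HTT} it suffices to compare their restrictions along $\yoB$; the restriction of the former at $b \in \cB$ is $\Lan_{\yoA}(H)(i^* \yoB(b))$, and since $i^* \yoB(b) = \Map_{\cB}(i(-), b)$ is the presheaf whose category of representables lying over it is the comma category $i/b$, this computes $\colim_{i/b} H = \Lan_i(H)(b)$, which is also the restriction of the latter.)
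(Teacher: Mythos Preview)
Your argument is correct and is essentially the paper's own proof: both start from the commutative square $i_! \circ \yoA \simeq \yoB \circ i$, pass to the induced commutative square of precomposition functors $(\yoA)^* (i_!)^* \simeq \jpb (\yoB)^*$, and then take left adjoints. The paper phrases this last step as ``all four functors have left adjoints, so the square of left adjoints commutes,'' whereas you phrase it as ``both composites are left adjoints with the same right adjoint''; the bookkeeping you flag about restricting the $(i^*)^* \dashv (i_!)^*$ adjunction to $\FunColim$ is exactly what the paper's terse version is relying on implicitly.
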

\begin{proof}
We have a commutative diagram
\begin{equation*}
\begin{tikzcd}
\cA \arrow[r, "i"] \arrow[d, "\yoA"'] & \cB \arrow[d, "\yoB"']\\
\cP(\cA) \arrow[r, "i_!"] & \cP(\cB)
\end{tikzcd}
\end{equation*}
that induces a commutative diagram of pullback functors:
\begin{equation}
\label[diagram]{diag:pullback-functors}
    \begin{tikzcd}
        \Fun(\cA, \cC)  & \Fun(\cB, \cC)  \arrow[l, "\jpb"'] \\
 \FunColim(\cP(\cA), \cC) \arrow[u, "(\yoA)^*"'] & \arrow[l, "(i_!)^*"'] \FunColim(\cP(\cB), \cC)  \arrow[u, "(\yoB)^*"']\
    \end{tikzcd}
\end{equation}
The adjunction $i_! \dashv i^*$ induces an adjunction $(i^*)^* \dashv (i_!)^*$, so
all of the functors in the above diagram have left adjoints. Thus, we obtain another commutative diagram:
\begin{equation*}
    \begin{tikzcd}
        \Fun(\cA, \cC)  \arrow[r, "\jshr"]  \arrow[d, "(\yoA)_!"] & \Fun(\cB, \cC) \arrow[d, "(\yoB)_!"]\\
  \FunColim(\cP(\cA), \cC) \arrow[r, "(i^*)^*"] & \FunColim(\cP(\cB), \cC)
    \end{tikzcd}
\end{equation*}
This is what we aimed to prove.
\end{proof}

Combining the above lemmas, we deduce:
\begin{theorem}
\label{prop:condition-for-factoring-through-P(A)}
\conditionForFactoringThroughPA
\end{theorem}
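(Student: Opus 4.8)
The plan is to run the chain of equivalences outlined in \cref{rem:outline-of-factorisation-proof}. First I would apply \cref{prop:first-factorisation-condition} to $H = \tilde F$: this replaces the existence of a factorisation of $\tilde F$ through $i^*$ by the single condition that the counit-induced natural transformation
\[ \tilde F\varepsilon \colon \tilde F i_! i^* \longrightarrow \tilde F \]
be an equivalence in $\Fun(\cP(\cB),\cC)$. Everything after this is a translation of this condition, which a priori concerns the opaque functor $\tilde F$ on presheaves, into the stated pointwise condition on $F$ alone.

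The key observation is that $\tilde F$, $i_!$ and $i^*$ all preserve small colimits (the last because colimits of presheaves are computed objectwise), so $\tilde F\varepsilon$ is a natural transformation between colimit-preserving functors $\cP(\cB)\to\cC$; since restriction along $\yoB$ induces an equivalence $\FunColim(\cP(\cB),\cC)\simra\Fun(\cB,\cC)$ by \cite[Theorem~5.1.5.6]{HTT}, the map $\tilde F\varepsilon$ is an equivalence iff its restriction along $\yoB$ is. Now I would simplify the source: \cref{prop:composing-lans} identifies $\tilde F i_!$ with $\Lan_{\yoA}(Fi)$, and then \cref{prop:second-Kan-lemma}, applied to the functor $Fi\colon\cA\to\cC$, identifies $\Lan_{\yoA}(Fi)\circ i^*$ with
\[ \Lan_{\yoB i}(Fi) \simeq \Lan_{\yoB}\bigl(\Lan_i(Fi)\bigr), \]
the last step being transitivity of left Kan extensions. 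Since $\jpb F = Fi$ and $\jshr = \Lan_i(-)$, this reads $\tilde F i_! i^* \simeq (\yoB)_!\bigl(\jshr\jpb F\bigr)$, while $\tilde F \simeq (\yoB)_!(F)$ by definition; under these identifications $\tilde F\varepsilon$ becomes $(\yoB)_!$ applied to the counit $\jshr\jpb F \to F$ of the adjunction $\jshr\dashv\jpb$. As $(\yoB)_!$ is the inverse of the equivalence above, it reflects equivalences, so $\tilde F\varepsilon$ is an equivalence iff the counit $\jshr\jpb F\to F$ is an equivalence in $\Fun(\cB,\cC)$.

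Finally I would unwind the last condition objectwise: the value of $\jshr\jpb F = \Lan_i(Fi)$ at $b\in\cB$ is computed by the pointwise Kan extension formula as $\colim(i/b \to \cA \xra{Fi} \cC)$, and the counit at $b$ is precisely the canonical comparison map appearing in the statement. Hence the factorisation exists iff this map is an equivalence for every $b$, as claimed.

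The step I expect to be the main obstacle is the naturality bookkeeping hidden in the middle paragraph: verifying that the identifications supplied by \cref{prop:composing-lans,prop:second-Kan-lemma} carry the counit $\tilde F\varepsilon$ of $i_!\dashv i^*$ exactly onto $(\yoB)_!$ of the counit of $\jshr\dashv\jpb$ (the passage from (iii) to (iv) in \cref{rem:outline-of-factorisation-proof}). This is a diagram chase through the commuting squares of pullback and left-Kan-extension functors used to prove those lemmas together with the triangle identities of the adjunctions involved; I would factor it out as its own lemma rather than attempt it inline.
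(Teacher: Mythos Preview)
Your proposal is correct and follows essentially the same route as the paper's proof: reduce via \cref{prop:first-factorisation-condition} to the counit condition, identify the source using \cref{prop:composing-lans,prop:second-Kan-lemma}, and invoke \cite[Theorem~5.1.5.6]{HTT} to pass to $\Fun(\cB,\cC)$. You are also right that the counit-matching step is the only nontrivial bookkeeping; the paper isolates exactly this check as a separate lemma in an appendix.
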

\begin{proof}
In \cref{prop:first-factorisation-condition} we showed that $\tilde F$ factors through $i^*$ if and only if the natural transformation
\begin{equation*}
\tilde F i_! i^* \to \tilde F
\end{equation*}
induced by the counit of the $i_! \dashv i^*$ adjunction is an equivalence. By \cref{prop:composing-lans}, the left-hand side of this is equivalent to $\Lan_{\yoA}(Fi) \circ i^*$, while by \cref{prop:second-Kan-lemma}, this in turn is equivalent to $\Lan_{\yoB i}(Fi)$. Since both sides of 
\begin{equation}
\label{eq:factorisation-condition}
    \Lan_{\yoB i}(Fi) \to \tilde F
\end{equation}
preserve colimits in $\cP(\cB)$, we can check whether \labelcref{eq:factorisation-condition} is an equivalence after restricting along $\yoB$. Therefore, $\tilde F$ factors through $i^*$ if and only if the natural transformation
\[
\Lan_i(Fi) \to F
\]
is an equivalence.
\end{proof}

\sectionspace
\section{The stable module \texorpdfstring{∞-}{infinity-}category is \texorpdfstring{$S$}{S}-homotopy invariant}
\sectionspace
\label{sec:stmod-inverts-S-equivalences}

We can now return to the specific case that interests us, namely
\begin{equation*}
\porbit(G)^\op \xra{i} \orbit(G)^\op \xra{\StMod(-)} \lCati.
\end{equation*}
We wish to show that we have an induced functor $\factorisation\colon \cP(\porbit(G))^\op \to \lCati$. By the dual of \cref{prop:condition-for-factoring-through-P(A)}, this happens if and only if the natural map
\begin{equation*}
    \StMod(G/H) \; \to \; \lim\big((i/(G/H))^\op \to \porbit(G)^\op \xra{\StMod} \lCati\big)
\end{equation*}
is an equivalence for every $H \leq G$. The slice category $i/(G/H)$ is naturally equivalent to the $p$-orbit category $\porbit(H)$, with the functor $\porbit(H) \to \porbit(G)$ being given by $H/P \mapsto G/P$. Mathew \cite[Corollary~9.16]{mathew16} proves a subgroup decomposition of $\StMod_{kH}$ over the orbit category:

\begin{theorem}[Mathew]
\label{prop:subgroup-decomposition-with-trivial-subgroup}
\SubgroupDecompositionWithTrivialSubgroup{H}{K}
\end{theorem}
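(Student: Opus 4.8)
I would follow the descent-theoretic strategy of \cite{mathew16}: the plan is to reduce the statement to the claim that a single commutative algebra $A_\sC\in\CAlg(\StMod_{kH})$ admits descent, and then to verify descent using the tensor-triangular geometry of $\StMod_{kH}$.

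First I would rewrite the target. By the Balmer--Mathew equivalence \cref{prop:restriction-as-base-change}, $\StMod_{kK}\simeq\Mod_{\StMod_{kH}}(A^H_K)$ naturally in $H/K$, so, writing $X_\sC\coloneqq\colim_{\orbitC(H)}(H/K)\in\cS_H$ for the colimit of the tautological diagram $\orbitC(H)\hookrightarrow\cS_H$, the fact that the extended functor $\StMod(-)\colon\cS_H^\op\to\lCati$ sends colimits of $H$-spaces to limits gives
\[\lim_{\orbitC(H)^\op}\StMod_{kK}\;\simeq\;\StMod(X_\sC).\]
A standard computation with nerves of subgroup posets shows that $X_\sC$ has contractible $P$-fixed points when $P$ is subconjugate to a member of $\sC$ and empty $P$-fixed points otherwise, so that $X_\sC$ is a model for $E\mathcal{F}$, the universal $H$-space for the family $\mathcal{F}$ of subgroups of $H$ that are subconjugate to a member of $\sC$. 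The hypothesis that $\sC$ is closed under intersection is essential here --- the identification fails without it, already when $\sC$ consists of two distinct subgroups of order $p$ inside an elementary abelian group of rank two. Now $E\mathcal{F}$ is the geometric realisation of the \v{C}ech nerve $S_\bullet$ of the augmented $H$-space $S_0\coloneqq\coprod_{[K]}H/K\to*$, the coproduct being over $H$-conjugacy classes of subgroups in $\sC$; the isotropy groups of the $H$-set $S_n=S_0^{\times(n+1)}$ are intersections of conjugates of members of $\sC$, hence again lie in $\sC$, so \cref{prop:restriction-as-base-change} identifies $\StMod(S_n)$ with $\Mod_{\StMod_{kH}}(k[S_n])$, and $k[S_n]\simeq A_\sC^{\otimes(n+1)}$ as commutative algebras, where $A_\sC\coloneqq\prod_{[K]}A^H_K=k[S_0]$. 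Applying $\StMod(-)$ to $E\mathcal{F}\simeq\colim_{\Delta^\op}S_\bullet$ and matching up the (co)simplicial structure maps therefore yields
\[\lim_{\orbitC(H)^\op}\StMod_{kK}\;\simeq\;\lim_{[n]\in\Delta}\Mod_{\StMod_{kH}}\!\bigl(A_\sC^{\otimes(n+1)}\bigr),\]
the totalisation of the Amitsur complex of $A_\sC$, and I would check that under this identification the functor in the statement becomes the canonical comparison map out of $\StMod_{kH}$. By Mathew's descent theorem it then suffices to prove that $A_\sC$ admits descent, i.e.\ that the unit $k$ lies in the thick $\otimes$-ideal generated by $A_\sC$.

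Since $k$ and $A_\sC$ are compact, this can be checked in the subcategory $\StMod_{kH}^\omega$ of compact objects. Its Balmer spectrum is $\operatorname{Proj}H^*(H;k)$ (Benson--Carlson--Rickard), which is Noetherian, so thick $\otimes$-ideals of $\StMod_{kH}^\omega$ are classified by cohomological support and it is enough to show that $A_\sC$ has full support. The support of $A^H_K$ (whose underlying module is $k[H/K]$) is the image of the map $\operatorname{Proj}H^*(K;k)\to\operatorname{Proj}H^*(H;k)$; by Quillen's stratification theorem $\operatorname{Proj}H^*(H;k)$ is the union of the images of $\operatorname{Proj}H^*(E;k)$ over the elementary abelian $p$-subgroups $E\leq H$, and each such image is contained in that of $\operatorname{Proj}H^*(K;k)$ whenever $E\leq K$. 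The hypothesis that every elementary abelian $p$-subgroup of $H$ lies in some member of $\sC$ thus gives $\bigcup_{[K]}\operatorname{supp}(A^H_K)=\operatorname{Proj}H^*(H;k)=\operatorname{supp}(k)$, so $A_\sC$ and $k$ generate the same thick $\otimes$-ideal, and in particular $k$ lies in the one generated by $A_\sC$. Mathew's descent theorem now supplies a symmetric monoidal equivalence $\StMod_{kH}\simra\lim_{[n]\in\Delta}\Mod_{\StMod_{kH}}(A_\sC^{\otimes(n+1)})$; symmetric monoidality of the whole composite is not a difficulty, since the Balmer--Mathew equivalence is symmetric monoidal, $\StMod(-)$ may be taken valued in $\CAlg(\PrLst)$ with $\CAlg(\PrLst)\to\lCati$ preserving limits, and the descent equivalence is symmetric monoidal.

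The step I expect to be the main obstacle is the geometric bookkeeping in the second paragraph: identifying $\colim_{\orbitC(H)}(H/K)$ with $E\mathcal{F}$ --- a genuine, if standard, computation with subgroup complexes that uses closure under intersection in an essential way --- checking that the $H$-sets $S_n$ have isotropy in $\sC$, and, most tediously, tracking the structure maps carefully enough to identify the resulting comparison functor with the restriction functor in the statement. The descendability input of the third paragraph is by contrast essentially mechanical once the tensor-triangular geometry of $\StMod_{kH}$ is available.
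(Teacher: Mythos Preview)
The paper does not prove this theorem; it is quoted from \cite[Corollary~9.16]{mathew16} and used as a black box. Your proposal is therefore not being compared against anything in the present paper, but it does reconstruct Mathew's original argument faithfully: reduce to descendability of the algebra $A_\sC=\prod_{[K]}A^H_K$ by identifying the limit with the totalisation of its Amitsur complex, then verify descendability by showing that $A_\sC$ has full support in $\operatorname{Proj}H^*(H;k)$ via Quillen stratification and the hypothesis on elementary abelians. This is correct, and your assessment of where the difficulties lie is accurate.

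One comment on the route you take to the Amitsur identification. The detour through $E\mathcal{F}$ is not how Mathew organises the argument, and the claim that $\E\orbitC(H)\simeq E\mathcal{F}$ when $\sC$ is closed under intersection, while true, is itself nontrivial: the $P$-fixed points of $\E\orbitC(H)$ are the nerve of the comma category $(H/P)\downarrow\orbitC(H)$, and one has to check that closure under intersection makes this category contractible whenever it is nonempty. Mathew works more directly with the algebras, observing that closure under intersection forces each $A_\sC^{\otimes(n+1)}\simeq k[S_0^{\times(n+1)}]$ to split as a finite product of $A^H_K$'s with $K\in\sC$, so that $\Mod_{\StMod_{kH}}(A_\sC^{\otimes(n+1)})$ is a finite product of terms already appearing in the $\orbitC(H)^\op$-diagram; the comparison of the two limits then follows. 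Your $G$-space packaging is equivalent and arguably more conceptual, but the step $\E\orbitC(H)\simeq E\mathcal{F}$ is exactly the ``standard computation with nerves of subgroup posets'' you flag, and it deserves an actual argument rather than an appeal to standardness.
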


\noindent In light of \cref{prop:restriction-as-base-change}, it is therefore enough to transport the above decomposition, applied to $\sC = \cS_p(H) \cup \{1\}$, up to $\StMod_{kG}$:

\begin{lemma}
\label{prop:module-subgroup-decomposition}
The natural map
\begin{equation*}
    \StMod(G/H) \rightshift{\simra} \lim_{H/P \in \porbit(H)^\op} \StMod(G/P)
\end{equation*}
is an equivalence.
\end{lemma}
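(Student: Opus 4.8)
The plan is to transport Mathew's subgroup decomposition for $H$ up to $\StMod_{kG}$ along the equivalence of \cref{prop:restriction-as-base-change}. Concretely, I would apply \cref{prop:subgroup-decomposition-with-trivial-subgroup} to $H$ with the collection $\sC = \cS_p(H) \cup \{1\}$ of all $p$-subgroups of $H$: this collection is closed under intersection, contains every elementary abelian $p$-subgroup of $H$, and satisfies $\orbitC(H) = \porbit(H)$, so \cref{prop:subgroup-decomposition-with-trivial-subgroup} gives that the canonical map $\StMod_{kH} \to \lim_{H/P \in \porbit(H)^\op} \StMod_{kP}$ is an equivalence. It then suffices to identify the diagram $\porbit(H)^\op \to \lCati$, $H/P \mapsto \StMod(G/P)$, together with the comparison map of the lemma out of $\StMod(G/H)$, with Mathew's diagram $H/P \mapsto \StMod_{kP}$ together with its comparison map out of $\StMod_{kH}$.

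For the identification I would use \cref{prop:restriction-as-base-change}, which exhibits $\StMod(-)\colon \orbit(G)^\op \to \lCati$ as $G/P \mapsto \Mod_{\StMod_{kG}}(A^G_P)$, with a morphism $G/P \to G/Q$ acting by base change along the induced algebra map $A^G_Q \to A^G_P$, and which identifies $\Mod_{\StMod_{kG}}(A^G_H)$ with $\StMod_{kH}$ in a way that carries the forgetful functor to $\CoInd^G_H$. For $P \leq H$, the projection $G/P \to G/H$ makes $A^G_P$ a commutative $A^G_H$-algebra, functorially in $H/P \in \porbit(H)^\op$; under $\Mod_{\StMod_{kG}}(A^G_H) \simeq \StMod_{kH}$ this commutative $A^G_H$-algebra corresponds to the commutative algebra $A^H_P$ of $\StMod_{kH}$, since $\CoInd^G_H A^H_P = \CoInd^G_H \CoInd^H_P k \simeq \CoInd^G_P k = A^G_P$ by transitivity of coinduction, with lax monoidality pinning down the algebra structure. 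Hence the diagram $H/P \mapsto \StMod(G/P) \simeq \Mod_{\StMod_{kG}}(A^G_P)$ is equivalent to $H/P \mapsto \Mod_{\StMod_{kH}}(A^H_P) \simeq \StMod_{kP}$, which is Mathew's diagram; the object $\StMod(G/H) = \Mod_{\StMod_{kG}}(A^G_H)$ is carried to $\StMod_{kH}$, and the canonical cone map into the limit is carried to Mathew's, so the former is an equivalence.

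The step I expect to be the main obstacle is the coherence buried in the second paragraph: upgrading the pointwise identification $A^G_P \simeq \CoInd^G_H A^H_P$ to an equivalence of functors $\porbit(H)^\op \to \CAlg(\StMod_{kH})$, and checking that the induced equivalence of $\lCati$-valued diagrams matches the two (canonical) comparison maps. I would discharge this either by unwinding the construction of $A^G_{(-)}$ as the localisation of the analogous functor $\orbit(G)^\op \to \CAlg(\Mod_{kG})$, so that the needed naturality of transitivity of coinduction is visible already at the level of $kG$-modules, or by observing that $H/P \mapsto A^G_P$ and $H/P \mapsto A^H_P$ are both obtained from a single ``coinduce the unit'' functor by restriction, along $\porbit(H) \to \orbit(G)$ and $\porbit(H) \hookrightarrow \orbit(H)$ respectively, these restrictions being intertwined by the induction functor $\orbit(H) \to \orbit(G)$, $H/P \mapsto G/P$. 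With that coherence in hand, the rest of the argument is formal.
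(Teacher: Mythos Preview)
Your proposal is correct and follows essentially the same route as the paper: apply Mathew's \cref{prop:subgroup-decomposition-with-trivial-subgroup} to $H$ with $\sC = \cS_p(H)\cup\{1\}$, then transport along the symmetric monoidal equivalence $\StMod_{kH}\simeq\Mod_{\StMod_{kG}}(A^G_H)$ of \cref{prop:restriction-as-base-change}, using transitivity of coinduction to match $A^G_P$ with $\CoInd^G_H(A^H_P)$. The paper makes explicit one step you leave implicit---it invokes \cite[Corollary~3.4.1.9]{HA} to identify $\Mod_{\Mod_{\StMod_{kG}}(A^G_H)}(A^G_P)$ with $\Mod_{\StMod_{kG}}(A^G_P)$---but otherwise the arguments coincide, and the paper is comparatively terse about the coherence issue you flag.
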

\begin{proof}
Let $\cM$ denote $\Mod_{\StMod_{kG}}(A^G_H)$; recall from \cref{sec:stmod-is-functor-from-orbit-category} that $A^G_H$ is $\CoInd^G_H(k)$ and that $\StMod(G/H)$ is equal to $\cM$ by definition. By \cite[Corollary~3.4.1.9]{HA}, for any $A \in \CAlg(\cM)$ we have a natural equivalence
\begin{equation*}
    \Mod_{\cM}(A) \simra \Mod_{\StMod_{kG}}(A)
\end{equation*}
given by forgetting the $A^G_H$-module structure. Therefore, we wish to prove that
\begin{equation*}
    \cM \rightshift{\to} \lim_{H/P \in \porbit(H)^\op} \Mod_{\cM}(A^G_P)
\end{equation*}
is an equivalence.

Recall from \cref{prop:restriction-as-base-change} that coinduction induces a functor 
\[
\barcoind \colon \StMod_{kH} \simra \cM
\]
that is an equivalence of symmetric monoidal ∞-categories. We obtain a commutative diagram
\begin{equation*}
\begin{tikzcd}[row sep=0.7cm]
\cM \arrow[r] & \displaystyle\lim_{H/P \in \porbit(H)^\op} \Mod_{\cM}(A^G_P) \\
              & \mathrlap{\displaystyle\lim_{H/P \in \porbit(H)^\op} \Mod_{\cM}(\CoInd_H^G(A^H_P))}\phantom{\displaystyle\lim_{H/P \in \porbit(H)^\op} \Mod_{\cM}(A^G_P)} \arrow{u}[anchor=center,rotate=-90,yshift=-1.0ex]{\sim} \\
\StMod_{kH} \arrow[uu, "\barcoind"] \arrow[r, "\sim"] & \mathrlap{\displaystyle\lim_{H/P \in \porbit(H)^\op} \Mod_{\StMod_{kH}}(A^H_P)}\phantom{\displaystyle\lim_{H/P \in \porbit(H)^\op} \Mod_{\cM}(A^G_P)} \arrow[u, "\barcoind"]
\end{tikzcd}
\end{equation*}
whose bottom arrow is an equivalence by Mathew's \cref{prop:subgroup-decomposition-with-trivial-subgroup}.
\end{proof}

We have therefore established:
\begin{theorem}
\label{prop:S-invariance-of-StMod}
\SInvarianceOfStMod
\end{theorem}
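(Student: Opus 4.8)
The plan is to obtain this directly from \cref{prop:condition-for-factoring-through-P(A)} and \cref{prop:module-subgroup-decomposition}: the substantive input is already contained in those two results, so what remains is to arrange the dualisation correctly and then translate the ``in particular'' clause. Write $i \colon \porbit(G) \hookrightarrow \orbit(G)$ for the inclusion of the full subcategory on transitive $G$-sets with $p$-group isotropy, and let $F \colon \orbit(G) \to \lCati^\op$ be the opposite of the functor $\StMod(-)\colon \orbit(G)^\op \to \lCati$. Passing to opposite categories, the right Kan extension $\StMod(-)\colon \cS_G^\op \to \lCati$ appearing in the statement is precisely (the opposite of) the left Kan extension $\tilde F$ of $F$ along the Yoneda embedding $\orbit(G) \to \cS_G \simeq \cP(\orbit(G))$. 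Since $\lCati$ has all small limits, $\lCati^\op$ has all small colimits, so \cref{prop:condition-for-factoring-through-P(A)} applies with $\cA = \porbit(G)$, $\cB = \orbit(G)$ and $\cC = \lCati^\op$. Under this dualisation, the assertion that $\StMod(-)$ factors through the restriction $\cP(\orbit(G))^\op \to \cP(\porbit(G))^\op$ is exactly the assertion that $\tilde F$ factors through $i^* \colon \cP(\orbit(G)) \to \cP(\porbit(G))$.

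Next I would verify the factorisation criterion. By the dual of \cref{prop:condition-for-factoring-through-P(A)}, the factorisation exists if and only if, for every $H \leq G$, the canonical map
\[
\colim\bigl(i/(G/H) \to \porbit(G) \xra{Fi} \lCati^\op\bigr) \to F(G/H)
\]
is an equivalence. Unwinding the opposites, a colimit in $\lCati^\op$ indexed by $i/(G/H)$ is a limit in $\lCati$ indexed by $(i/(G/H))^\op$, so this condition says that $\StMod(G/H) \to \lim_{(i/(G/H))^\op} \StMod$ is an equivalence. Using the natural identification $i/(G/H) \simeq \porbit(H)$, under which the projection to $\porbit(G)$ becomes the functor $H/P \mapsto G/P$, the target rewrites as $\lim_{H/P \in \porbit(H)^\op} \StMod(G/P)$; thus the criterion is exactly the statement of \cref{prop:module-subgroup-decomposition}, and the factorisation exists.

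Finally, for the ``in particular'' clause I would recall that, regarding a $G$-space $X$ as an object of $\cP(\orbit(G))$, one has $X(G/P) \simeq X^P$. Hence a map $f \colon X \to Y$ of $G$-spaces is an $S$-equivalence if and only if $i^* f$ is an equivalence in $\cP(\porbit(G))$ (the trivial subgroup is a $p$-group, so this condition already includes the underlying spaces). Since $\StMod(-)\colon \cS_G^\op \to \lCati$ factors through $\cP(\porbit(G))^\op$, it sends every $S$-equivalence to an equivalence of ∞-categories.

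I do not anticipate a genuine obstacle: the two hard ingredients — \cref{prop:condition-for-factoring-through-P(A)} and Mathew's subgroup decomposition behind \cref{prop:module-subgroup-decomposition} — are already available. The one point requiring care is the repeated passage to opposite categories, namely checking that the comparison map produced by the dual of \cref{prop:condition-for-factoring-through-P(A)} is literally the comparison map of \cref{prop:module-subgroup-decomposition} and not its formal dual, and that $(i/(G/H))^\op$ is being identified with $\porbit(H)^\op$ compatibly with the functors into $\lCati$.
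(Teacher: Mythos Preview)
Your proposal is correct and follows essentially the same approach as the paper: reduce via (the dual of) \cref{prop:condition-for-factoring-through-P(A)} to checking the slice-wise condition, identify $i/(G/H) \simeq \porbit(H)$, and invoke \cref{prop:module-subgroup-decomposition}. Your treatment is slightly more explicit about the dualisation through $\lCati^\op$ and about why the factorisation inverts $S$-equivalences, but the argument is the same.
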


\begin{remark}
As noted in the introduction, the only part of this argument that was non-formal was checking the descent statement in \cref{prop:module-subgroup-decomposition}. Therefore, given a collection $\sF$ of subgroups of $G$ that is closed under intersections and a functor $F\colon \orbit(G)^\op \to \lCati$ that has a subgroup decomposition associated with $\sF$, the extended functor $\tilde F\colon \cS_G^\op \to \lCati$ inverts the weak equivalences associated with $\sF$.
\end{remark}

\sectionspace
\section{Decompositions of the stable module \texorpdfstring{∞-}{infinity-}category}
\sectionspace
\label{sec:zig-zags-of-S-equivalences}

In this section, we recall Dwyer's construction \cite[Sections~3.4--3.7]{dwyer98} of $G$-spaces that encode candidates for the subgroup, centraliser, and normaliser decompositions. These $G$-spaces are (homotopy) colimits, so applying $\StMod(-)$ gives a limit of ∞-categories that receives a comparison map from $\StMod_{kG}$; 
the $G$-space encodes a decomposition precisely when this map is an equivalence. We also have $S$-homotopy equivalences between these $G$-spaces, so can use the fact that $\StMod(-)$ inverts $S$-homotopy equivalences of $G$-spaces to \enquote{propagate} Mathew's subgroup decomposition to centraliser and normaliser decompositions.

Let $\sC$ be a collection of subgroups of $G$. Note that we can consider a $G$-set as a discrete $G$-space.
\begin{description}
    \item[Centraliser] Recall from \cref{sec:notation-and-conventions} that the fusion category $\fusionC(G)$ has objects given by the elements of $\sC$ and morphisms given by group homomorphisms that are induced by conjugation in $G$. We have a functor $\alpha \colon \fusionC(G)^{\,\op} \to \cS_G$ that takes $H \in \sC$ to the conjugacy class of the inclusion $H \hookrightarrow G$, which is isomorphic to $G/C_G(H)$ as a $G$-set. The colimit of $\alpha$ is a $G$-space that we will denote $\E\fusionC(G)$.
    \item[Subgroup] We have an inclusion functor $\beta \colon \orbitC(G) \to \cS_G$. We let $\E\orbitC(G)$ denote the colimit of $\beta$.
    \item[Normaliser] Recall from \cref{sec:notation-and-conventions} that the orbit simplex category $\osC(G)$ is the poset of $G$-conjugacy classes of non-degenerate simplices in the nerve of $\sC$, ordered by refinement. We have a functor $\delta\colon \osC(G)^{\op} \to \cS_G$ that takes a $G$-orbit of simplices $[\sigma]$ to itself, considered as a discrete $G$-space. This $G$-set is isomorphic to $G/N_G(\sigma)$, where for $\sigma = (P_0 < \ldots < P_n)$ we define $N_G(\sigma) \coloneqq \bigcap_{0 \leq i \leq n} N_G(P_i)$. We let $\sd(\sC)$ denote the colimit of $\delta$.
\end{description}

\begin{remark} Note that $\colim(\delta)$ really is $G$-equivalent to the subdivision of the nerve of the poset $\sC$, justifying our notation. This can be checked directly using the model for $\colim \delta$ given by the diagonal of a bisimplicial set that is explained in \cite[\nopp XII~5.2]{BK72}.
\end{remark}

\begin{remark}
Dwyer calls these three spaces $X^\alpha_\sC$, $X_\sC^\beta$, and $\sd X^\delta_\sC$, respectively. For historical reasons, $\E\fusionC(G)$ is sometimes called $\E\mathbf{A}_\sC$ elsewhere in the literature.
\end{remark}

Since $\StMod(-)$ sends small colimits of $G$-spaces to limits of ∞-categories, we get
\begin{IEEEeqnarray*}{RCL}
\StMod(\E\fusionC(G)) & \rightshift{\simeq} & \lim_{H \in \fusionC(G)} \StMod(\alpha(H)) \\
                     & \rightshift{\simeq} & \lim_{H \in \fusionC(G)} \StMod(G/C_G(H)) \\
                     & \rightshift{\simeq} & \lim_{H \in \fusionC(G)} \StMod_{kC_G(H)}.
\end{IEEEeqnarray*}
The natural map $\E\fusionC(G) \to *$ induces a restriction functor $\StMod_{kG} \to \lim_{\fusionC(G)} \StMod_{kC_G(H)}$, so in this way $\E\fusionC(G)$ encodes a candidate for a centraliser decomposition for $\StMod_{kG}$. Similarly, $\E\orbitC(G) \to *$ induces a functor
\begin{equation*}
\StMod_{kG} \rightshift{\to} \lim_{H \in \orbitC(G)^{\op}} \StMod_{kH}    
\end{equation*}
corresponding to a subgroup decomposition and $\sd(\sC) \to *$ induces a functor
\begin{equation*}
\StMod_{kG} \rightshift{\to} \lim_{\sigma \in \osC(G)} \StMod_{kN_G(\sigma)}
\end{equation*}
corresponding to a normaliser decomposition. However, for a general collection $\sC$ there is no reason for any of these functors from $\StMod_{kG}$ to be an equivalence. We have comparison maps between the $G$-spaces associated with a collection:
\begin{equation*}
\begin{tikzcd}
    & \sd(\sC) \arrow[sloped, d, "\sim"] & \\
\E\orbitC(G) \arrow[r] & \sC & \arrow[l] \E\fusionC(G) 
\end{tikzcd}
\end{equation*}
Here the map from $\sd(\sC)$ is the $G$-equivalence sending a simplex $P_0 < \ldots < P_n$ to $P_0$; the map from $\E\orbitC(G)$ sends a point $x \in G/H$ to its stabiliser $G_x$; and the map from $\E \fusionC(G)$ sends an inclusion $H \hookrightarrow G$ to $H$.

Let $\SpG$ denote the collection of all non-trivial $p$-subgroups and $\ApG$ denote the subcollection of non-trivial elementary abelian $p$-subgroups. Recall that Mathew's \cref{prop:subgroup-decomposition-with-trivial-subgroup} shows that for certain collections, including $\sC = \SpG \cup \{1\}$ and $\sC = \ApG \cup \{1\}$, we obtain a subgroup decomposition. However, to obtain a useful centraliser or normaliser decomposition, we need to remove the trivial subgroup from the collection, otherwise $\StMod_{kG}$ itself will appear in the decomposition on the right hand side. We therefore need a minor variation of \cref{prop:subgroup-decomposition-with-trivial-subgroup}:

\begin{lemma}
\label{prop:subgroup-decomposition-without-trivial-subgroup}
Let $\sC$ be a collection of subgroups of\, $G$ such that $\sC \cup \{1\}$ is closed under intersection and such that every elementary abelian $p$-subgroup of\, $G$ is contained in a subgroup in $\sC$. There is an equivalence of symmetric monoidal ∞-categories
\[\StMod_{kG} \rightshift{\simra} \lim_{G/H \in \orbitC(G)^\op} \StMod_{kH}.\]
\end{lemma}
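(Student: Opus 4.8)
The plan is to reduce the statement directly to Mathew's \cref{prop:subgroup-decomposition-with-trivial-subgroup}. Set $\sC^{+} \coloneqq \sC \cup \{1\}$. By hypothesis $\sC^{+}$ is closed under intersection, and every elementary abelian $p$-subgroup of $G$ lies in some member of $\sC \subseteq \sC^{+}$, so \cref{prop:subgroup-decomposition-with-trivial-subgroup} applies to $\sC^{+}$ and yields a symmetric monoidal equivalence $\StMod_{kG} \simra \lim_{\mathcal{O}_{\sC^{+}}(G)^\op} \StMod$. It then suffices to show that restriction along the (fully faithful) inclusion $j \colon \orbitC(G)^\op \hookrightarrow \mathcal{O}_{\sC^{+}}(G)^\op$ induces an equivalence $\lim_{\mathcal{O}_{\sC^{+}}(G)^\op}\StMod \simra \lim_{\orbitC(G)^\op}\StMod$, and then to compose the two.

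To prove the latter I would show that $\StMod(-)\colon \mathcal{O}_{\sC^{+}}(G)^\op \to \CAlg(\PrLst)$ is the right Kan extension along $j$ of its restriction to $\orbitC(G)^\op$; granting this, the restriction map is an equivalence, since $\lim_{\mathcal{O}_{\sC^{+}}(G)^\op}\circ \Ran_j$ and $\lim_{\orbitC(G)^\op}$ are both right adjoint to the constant-diagram functor on $\orbitC(G)^\op$ (using that $j^{*}$ of a constant diagram is again constant). The only object of $\mathcal{O}_{\sC^{+}}(G)$ not already in $\orbitC(G)$ is the free orbit $G/1$, so it is enough to check the Kan extension condition at $G/1$. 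Now $\Hom_{\orbit(G)}(G/H, G/1) = (G/1)^H = \emptyset$ for every non-trivial subgroup $H$, so in the opposite category there is no morphism from $G/1$ into the image of $j$; the comma category governing $\Ran_j$ at $G/1$ is thus empty, and its limit is the terminal object of $\CAlg(\PrLst)$. On the other hand $\StMod(G/1) \simeq \StMod_k$ is itself terminal, since $k$ is a field and so every $k$-module is projective; hence the comparison map is an equivalence at $G/1$, while at the objects coming from $\orbitC(G)$ it is an equivalence because $j$ is fully faithful. Finally, as all the ∞-categories and functors involved lie in $\CAlg(\PrLst)$ and the forgetful functor $\CAlg(\PrLst) \to \lCati$ preserves limits, the composite equivalence is symmetric monoidal.

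The step I expect to require the most care is keeping the variances straight. The point is that $(G/1)^H$, equivalently $\Hom_{\orbit(G)}(G/H, G/1)$, vanishes for $H \neq 1$; after passing to opposite categories this is exactly what empties the comma category controlling the \emph{right} Kan extension. By contrast, in the homotopy-colimit picture of $G$-spaces the natural map $\E\orbitC(G) \to \E\mathcal{O}_{\sC^{+}}(G)$ is generally \emph{not} an $S$-equivalence — its underlying space need not be contractible, for instance when $G = S_3$, $p = 2$, and $\sC$ is the set of Sylow $2$-subgroups — so one really does need to use that $\StMod_k$ is terminal rather than any statement about the $G$-spaces alone.
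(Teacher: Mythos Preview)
Your argument is correct and is essentially the same as the paper's: apply Mathew's theorem to $\sC \cup \{1\}$, observe that $\StMod_{k\{1\}}$ is terminal, deduce that the diagram on $\mathcal{O}_{\sC^{+}}(G)^\op$ is the right Kan extension of its restriction to $\orbitC(G)^\op$, and conclude that the two limits agree. You have simply spelled out the right-Kan-extension step (the emptiness of the relevant comma category and the terminality of $\StMod_k$) more explicitly than the paper does.
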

\begin{proof}
Since $\StMod(G/\{1\}) \simeq *$, the diagram of ∞-categories that includes the trivial subgroup is a right Kan extension of the diagram that omits it, so the limits of the two diagrams agree.
\end{proof}

\Cref{prop:subgroup-decomposition-without-trivial-subgroup} shows we have a subgroup decomposition for $\sC = \SpG$ and $\sC = \ApG$. We now transfer this result to other collections and decompositions using the fact that $\StMod(-)$ inverts $S$-equivalences. We consider the following collections of $p$-subgroups: let $\BpG$ be the collection of non-trivial $p$-radical subgroups, \emph{i.e.} non-trivial $p$-subgroups $P \leq G$ such that $P$ is the maximal normal $p$-subgroup in $N_G(P)$. Let the collection $\IpG$ consist of all non-trivial subgroups that are intersections of a set of Sylow $p$-subgroups in $G$. Finally, let $\ZpG$ be the subcollection of $\ApG$ consisting of those subgroups $V$ such that $V$ is the set of elements in the centre of $C_G(V)$ whose order divides $p$, \emph{i.e.} such that $V = \Omega_1O_pZ(C_G(V))$.

\begin{theorem}
\label{prop:centraliser-and-normaliser-decompositions}
\CentraliserAndNormaliserDecompositions
\end{theorem}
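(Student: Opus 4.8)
The plan is to use \cref{prop:S-invariance-of-StMod}: since $\StMod(-)$ inverts $S$-equivalences of $G$-spaces, it suffices to connect the encoding $G$-space for each desired decomposition to one of Mathew's subgroup decompositions via a zig-zag of $S$-equivalences. First I would establish the subgroup decompositions. For $\SpG$ and $\ApG$ this is immediate from \cref{prop:subgroup-decomposition-without-trivial-subgroup}, since both collections (together with the trivial subgroup) are closed under intersection and contain every elementary abelian $p$-subgroup. For $\BpG$, $\IpG$, and $\ZpG$, which need not be intersection-closed, I would instead invoke the results of Grodal--Smith \cite{GS06}: they show that the inclusion-induced maps $\E\orbitX{\BpG}(G) \to \E\orbitS(G)$, $\E\orbitX{\IpG}(G) \to \E\orbitS(G)$, and $\E\orbitX{\ZpG}(G) \to \E\orbitX{\ApG}(G)$ (or a zig-zag through a common refinement) are $S$-equivalences — this is where the group-theoretic content lives. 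Applying $\StMod(-)$ and using \cref{prop:S-invariance-of-StMod} together with the already-established decompositions for $\SpG$ and $\ApG$ then gives the subgroup decomposition for all five collections. The symmetric monoidal refinement comes for free because, as noted after the relevant remark, $\StMod(-)$ can be taken to land in $\CAlg(\PrLst)$ and limits there are computed underlying.

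Next, for the normaliser decomposition, I would use the $G$-equivalence $\sd(\sC) \xra{\sim} \sC$ displayed in the text (sending a chain $P_0 < \dots < P_n$ to $P_0$), together with the comparison map $\E\orbitC(G) \to \sC$. Grodal--Smith identify, for each of the five collections, when the map $\E\orbitC(G) \to \sC$ — or a zig-zag of such maps between the $G$-spaces $\sC$ for the various collections — is an $S$-equivalence; combined with the $S$-equivalence $\sd(\sC) \simeq \sC$ this produces a zig-zag of $S$-equivalences from $\sd(\sC)$ to $\E\orbitS(G)$ (resp.\ $\E\orbitX{\ApG}(G)$). Applying $\StMod(-)$ and unwinding $\StMod(\delta([\sigma])) \simeq \StMod(G/N_G(\sigma)) \simeq \StMod_{kN_G(\sigma)}$ yields the normaliser decomposition.

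For the centraliser decomposition, available only for $\SpG$, $\ApG$, and $\ZpG$, I would use the second comparison map $\E\fusionC(G) \to \sC$ of the displayed diagram, sending an inclusion $H \hookrightarrow G$ to $H$. Again by Grodal--Smith, for these three collections this map (or an appropriate zig-zag) is an $S$-equivalence, so it can be spliced with the zig-zags above to reach $\E\orbitS(G)$ or $\E\orbitX{\ApG}(G)$. Applying $\StMod(-)$ and using $\StMod(\alpha(H)) \simeq \StMod(G/C_G(H)) \simeq \StMod_{kC_G(H)}$ gives the centraliser decomposition indexed by $\fusionC(G)$.

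The main obstacle is not formal but bookkeeping: one must check that each relevant map between encoding $G$-spaces is genuinely an $S$-equivalence, i.e.\ induces an equivalence on $P$-fixed points for every $p$-subgroup $P$, and this is precisely the input drawn from \cite{GS06}. In particular the cases $\BpG$, $\IpG$, and $\ZpG$ require their ampleness-type results, and one must be careful that the relevant collections satisfy the hypotheses needed there (for instance that $\ZpG$ and $\BpG$ are large enough to contain all elementary abelian $p$-subgroups up to the $S$-equivalences in play). Once the Grodal--Smith $S$-equivalences are in hand, the remaining steps are routine: $\StMod(-)$ inverts them by \cref{prop:S-invariance-of-StMod}, and identifying the values of $\StMod(-)$ on $\alpha$, $\beta$, and $\delta$ with the stated stable module categories is immediate from the construction of $\StMod(-)$ on $\orbit(G)$.
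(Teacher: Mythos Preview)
Your proposal is correct and follows essentially the same approach as the paper: anchor at Mathew's subgroup decomposition for $\SpG$ and $\ApG$ via \cref{prop:subgroup-decomposition-without-trivial-subgroup}, then propagate to the other collections and to the normaliser and centraliser decompositions using the zig-zags of $S$-equivalences from \cite[Theorem~1.1]{GS06} together with \cref{prop:S-invariance-of-StMod}. The paper organises the Grodal--Smith input as a single table of $G$- and $S$-equivalences among the spaces $\E\orbitC(G)$, $\sC$, and $\E\fusionC(G)$ for the five collections, but the logical content is exactly what you describe.
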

\begin{proof}
We can restate the conclusion of \Cref{prop:subgroup-decomposition-without-trivial-subgroup} for $\SpG$ and $\ApG$ as saying that
\begin{equation}
\label{eq:subgroup-decomposition-spaces}
\E\orbitS(G) \to * \quad \text{and} \quad \E\orbitA(G) \to *
\end{equation}
are both sent to equivalences by $\StMod(-)$.

We now transport this information along $S$-homotopy equivalences. The following table is taken from \cite[Theorem~1.1]{GS06}; a solid line denotes a $G$-homotopy equivalence, while a dashed line denotes an $S$-homotopy equivalence. The column labels represent the different collections of subgroups, where for conciseness we omit $p$ and $G$ from the notation.

\begin{equation*}
\vspace{0.2cm}
\begin{tikzpicture}[Sequiv/.style={dashed}]
    \draw[anchor=south] (0, 0.3) node {$\cB$};
    \draw[anchor=south] (1, 0.3) node {$\cI$};
    \draw[anchor=south] (2, 0.3) node {$\cS$};
    \draw[anchor=south] (3, 0.3) node {$\cA$};
    \draw[anchor=south] (4, 0.3) node {$\cZ$};
    
    \draw (-1, 0) node {$\E\orbitC(G)$};
    \draw (-1, -1) node {$\sC$};
    \draw (-1, -2) node {$\E\fusionC(G)$};
    
    \draw (0,-1) -- (4,-1);
    \draw (0,0) -- (2,0);
    \draw (3,0) -- (4,0);
    \draw (2,-2) -- (4,-2);
    
    \foreach \x in {0,1,2}{
        \draw[Sequiv] (\x,0) -- (\x,-1);
    }
    \foreach \x in {2,3,4}{
        \draw[Sequiv] (\x,-1) -- (\x,-2);
    }
    
    \foreach \x in {0,1,2,3,4}{
        \foreach \y in {-1,-2}{
            \fill (\x,\y) circle[radius=0.07];
        }
    }
    \fill (0, 0) circle[radius=0.07];
    \fill (1, 0) circle[radius=0.07];
    \fill[red] (2, 0) circle[radius=0.07];
    \fill[red] (3, 0) circle[radius=0.07];
    \fill (4, 0) circle[radius=0.07];
\end{tikzpicture}
\end{equation*}

The subgroup decompositions arising from the maps in \labelcref{eq:subgroup-decomposition-spaces} correspond to the points $\E\orbitS(G)$ and $\E\orbitA(G)$, marked in red. The equivalences in the first row show that we have a subgroup decomposition for all of the collections in the table. More interestingly, the $S$-equivalence $\E \orbitS(G) \to \SpG$ induces a normaliser decomposition
\begin{equation*}
\begin{tikzcd}
    \lim_{\orbitS(G)^\op} \StMod(G/P) & \arrow[l, "\sim"'] \StMod(\SpG) \arrow[r, "\sim"] & \lim_{\sigma \in \osS(G)} \StMod(G/N_G(\sigma)) \\
    & \StMod(G/G) \arrow[sloped, ul, "\sim"'] \arrow[u] \arrow[ur] &
\end{tikzcd}
\end{equation*}
and hence (via the equivalences in the second row of the table) a normaliser decomposition for all the collections in the table. Finally, the same argument applied to the $S$-equivalence $\E \fusionS(G) \to \SpG$ gives centraliser decompositions for the collections $\SpG$, $\ApG$, and $\ZpG$.
\end{proof}

\begin{remark}
We don't have a condition analogous to Dwyer's \enquote{ampleness} for decompositions of the stable module ∞-category. Recall that $\sC$ is \emph{ample} if the natural map \[\sC_{\h G} \to (*)_{\h G} \simeq \B G\] induces an isomorphism on mod $p$ homology, and that there are subgroup, centraliser, and normaliser decompositions associated with $\sC$ if and only if $\sC$ is ample. This latter statement is due to the maps
\begin{equation*}
\begin{tikzcd}
    & \sd(\sC) \arrow[d] & \\
\E\orbitC(G) \arrow[r] & \sC & \arrow[l] \E\fusionC(G) 
\end{tikzcd}
\end{equation*}
being $\h G$-homotopy equivalences, \emph{i.e.} $G$-maps that are homotopy equivalences.
Since these maps are not always $S$-equivalences, there is no analogous condition for the stable module ∞-category. The same phenomenon is discussed in \cite[Remark~3.10]{dwyer98} in the context of the behaviour of spectral sequences associated with the decompositions.
\end{remark}

\appendix
\sectionspace
\section{All relevant natural transformations are counits}
\sectionspace
\label{sec:its-a-counit}

In this section, we prove the assertion made in \cref{rem:outline-of-factorisation-proof} that the natural transformation in each step of the outline of \cref{prop:condition-for-factoring-through-P(A)} is given by the counit of some adjunction. The only step for which this is not obvious is the following:
\begin{lemma}
Under the equivalence
\[
(i^*)^* (i_!)^* (\yoB)_! \simra (\yoB)_! \jshr \jpb
\]
given by \cref{prop:composing-lans,prop:second-Kan-lemma}, the natural transformation
\begin{equation}
\label{eq:top-nat-transf}
(i^*)^* (i_!)^* (\yoB)_! \to (\yoB)_!
\end{equation}
induced by the counit of the $(i^*)^* \dashv (i_!)^*$ adjunction corresponds to the natural transformation
\begin{equation}
\label{eq:bottom-nat-transf}
(\yoB)_! \jshr \jpb \to (\yoB)_!
\end{equation}
induced by the counit of the $\jshr \dashv \jpb$ adjunction.
\end{lemma}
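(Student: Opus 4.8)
We verify the identity of natural transformations after restricting along the Yoneda embedding $\yoB$. Both source functors take values in $\FunColim(\cP(\cB),\cC)$: this is immediate for $(\yoB)_!\jshr\jpb$, which sends $F$ to $\Lan_{\yoB}(\Lan_i(Fi))$, and for $(i^*)^*(i_!)^*(\yoB)_!$ it holds because $F \mapsto \tilde F i_! i^*$ is a composite of the colimit-preserving functors $i^*$ (colimits in presheaf categories are computed pointwise), $i_!$ (a left adjoint), and $\tilde F$ (a left Kan extension along $\yoB$). As recalled in \cref{rem:outline-of-factorisation-proof}, restriction along $\yoB$ induces an equivalence $(\yoB)^*\colon \FunColim(\cP(\cB),\cC) \simra \Fun(\cB,\cC)$ by \cite[Theorem~5.1.5.6]{HTT}. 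Writing $\Psi$ for the equivalence $(i^*)^*(i_!)^*(\yoB)_! \simra (\yoB)_!\jshr\jpb$ of \cref{prop:composing-lans,prop:second-Kan-lemma}, it therefore suffices to show that $(\yoB)^*$ sends the natural transformation \labelcref{eq:top-nat-transf} and the natural transformation \labelcref{eq:bottom-nat-transf} precomposed with $\Psi$ to the same map.

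The plan is to show that $(\yoB)^*$ carries each of \labelcref{eq:top-nat-transf} and \labelcref{eq:bottom-nat-transf} to the counit $\Lan_i(Fi) \to F$ of the adjunction $\jshr \dashv \jpb$, and carries $\Psi$ to the identity. For \labelcref{eq:top-nat-transf}: unwinding the induced adjunction, this transformation is $\tilde F$ whiskered with the counit $i_! i^* \Rightarrow \id_{\cP(\cB)}$ of $i_! \dashv i^*$. Applying $(\yoB)^*$, I would use the canonical equivalence $i_! i^* \yoB(b) \simeq \colim_{(a,\, ia \to b) \in i/b} \yoB(ia)$ — obtained by resolving the restricted representable $i^* \yoB(b)$ by representables on $\cA$ via density, applying the colimit-preserving functor $i_!$, and using $i_! \yoA \simeq \yoB i$ — under which the counit at $\yoB(b)$ becomes the canonical map $\colim_{i/b} \yoB(ia) \to \yoB(b)$; since $\tilde F$ preserves colimits, $(\yoB)^*$ of \labelcref{eq:top-nat-transf} at $b$ becomes $\colim_{i/b} F(ia) \to F(b)$, which is exactly the counit of $\jshr \dashv \jpb$. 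For \labelcref{eq:bottom-nat-transf}: since $\yoB$ is fully faithful, the unit $\id \simra (\yoB)^* (\yoB)_!$ is an equivalence, and so $(\yoB)^*$ of \labelcref{eq:bottom-nat-transf} is visibly the counit $\jshr \jpb F \to F$ as well. Granting that $(\yoB)^* \Psi$ is the identity under the two resulting identifications of the source functors with $\jshr \jpb$, the two composites agree and the lemma follows.

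The outstanding point — that $(\yoB)^*$ carries $\Psi = (\rho \jpb) \circ ((i^*)^* \theta)^{-1}$, where $\theta$ and $\rho$ denote the equivalences of \cref{prop:composing-lans} and \cref{prop:second-Kan-lemma} respectively, to the identity — is the main obstacle, and is exactly the tiring diagram chase anticipated in \cref{rem:outline-of-factorisation-proof}. It is nonetheless mechanical: $\theta$ is pinned down by the requirement that $(\yoA)^* \theta$ be the canonical equivalence $Fi \simeq \tilde F i_! \yoA$ (all the functors involved preserve colimits, hence are determined by their restriction along $\yoA$), while $\rho$ is the canonical comparison between the two colimit-preserving left adjoints of $(\yoA)^*(i_!)^* = \jpb (\yoB)^*$; restricting along $\yoB$ and repeatedly applying the triangle identities for the adjunctions $(\yoA)_! \dashv (\yoA)^*$, $(\yoB)_! \dashv (\yoB)^*$, $i_! \dashv i^*$, and $\jshr \dashv \jpb$ collapses $(\yoB)^* \Psi$ to the identity. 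Alternatively, one may package \cref{prop:composing-lans,prop:second-Kan-lemma} as the single statement that the commutative square of pullback functors \cref{diag:pullback-functors} admits a commuting mate square of left adjoints — equivalently, that the restriction of the adjunction $(i^*)^* \dashv (i_!)^*$ to $\FunColim$ corresponds to $\jshr \dashv \jpb$ under $(\yoA)^*$ and $(\yoB)^*$ — whereupon the lemma becomes the general fact that a morphism of adjunctions intertwines the counits, and the diagram chase is the verification of the requisite $(\infty,2)$-categorical coherences.
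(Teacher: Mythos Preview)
Your approach is correct and is essentially the same diagram chase as the paper's, just reorganised: you post-compose with $(\yoB)^*$ at the outset and reduce to an identity in $\Fun(\cB,\cC)$, whereas the paper traces the counit element around a large commutative diagram of mapping spaces built from the adjunction bijections and restrictions along $\yoA$ and $\yoB$; both arguments hinge on precisely the same compatibility of the equivalences of \cref{prop:composing-lans,prop:second-Kan-lemma} with the various (co)units, and both leave the routine verifications to the reader. Your closing remark about packaging the two lemmas as a morphism of adjunctions (so that the lemma becomes the general fact that such a morphism intertwines counits) is a cleaner conceptual gloss that the paper does not take, though carrying it out rigorously in the $\infty$-categorical setting would still require checking the same coherences.
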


\begin{proof}
This amounts to a large diagram chase; the diagram is reproduced below. We first explain why the diagram proves the lemma, then explain why the diagram commutes. Every map in the diagram is an equivalence. For functors $F, F' \in \Fun(\cC, \cD)$, we denote the mapping space of natural transformations from $F$ to $F'$ by $\Map(F, F')$.

The natural transformation \labelcref{eq:top-nat-transf} is an element of the top-left mapping space in the diagram, and the two vertical maps on the left-hand edge are induced by the equivalences of \cref{prop:composing-lans,prop:second-Kan-lemma}. We therefore aim to show that \labelcref{eq:top-nat-transf} is sent to \labelcref{eq:bottom-nat-transf} in the bottom-left corner of the region labelled $\circled{3}$.

Since \labelcref{eq:top-nat-transf} is induced by the counit of the $(i^*)^* \dashv (i_!)^*$ adjunction, it is sent to the identity natural transformation in the top-right corner of the square labelled $\circled{2}$. The maps along the right-hand edge of the diagram are either induced by cancelling inverse equivalences or by applying the identity $\yoA^*(i_!)^* \simeq j^* \yoB^*$ of \cref{diag:pullback-functors}. One can check that these maps send the identity natural transformation in the top-right corner of $\circled{2}$ to the identity natural transformation in the bottom-right corner of the diagram, which in turn is sent to \labelcref{eq:bottom-nat-transf} in the bottom-left corner of the region labelled $\circled{3}$.

Therefore, it remains to establish that the diagram commutes. The square labelled $\circled{2}$ commutes by definition of its left-hand edge; see \cref{prop:composing-lans}. The region labelled $\circled{3}$ also commutes by definition of its left-hand edge; see \cref{prop:second-Kan-lemma}. All the other regions are easily seen to commute.
\end{proof}
\begin{equation*}
\scalebox{0.85}{
    \begin{tikzcd}[ampersand replacement=\&,column sep=1.5cm]
        \Map((i^*)^* (i_!)^* (\yoB)_!, (\yoB)_!) \arrow[r, "(i^*)^* \dashv\, (i_!)^*"] \arrow[ddd] \arrow[rddd, phantom, "\circled{1}"] \& \Map((i_!)^* (\yoB)_!, (i_!)^* (\yoB)_!) \arrow[r, "\yoA^*"] \arrow[ddd] \arrow[rddd, phantom, "\circled{2}"] \& \Map(\yoA^* (i_!)^* (\yoB)_!, \yoA^* (i_!)^* (\yoB)_!) \arrow[d] \\
        \& \& \Map(\jpb \yoB^* (\yoB)_!, \yoA^* (i_!)^* (\yoB)_!) \arrow[d] \\
        \&  \& \Map(\jpb, \yoA^* (i_!)^* (\yoB)_!) \arrow[d] \\
        \Map((i^*)^* (\yoA)_! \jpb, (\yoB)_!) \arrow[rrdd, phantom, "\circled{3}"] \arrow[r, "(i^*)^* \dashv\, (i_!)^*"] \arrow[dd] \&  \Map((\yoA)_! \jpb, (i_!)^* (\yoB)_!) \arrow[r, "\yoA^*"] \arrow[dr, "(\yoA)_! \,\dashv\, \yoA^*"'] \& \Map(\yoA^* (\yoA)_! \jpb, \yoA^* (i_!)^* (\yoB)_!) \arrow[d] \\
        \& \& \Map(\jpb, \yoA^* (i_!)^* (\yoB)_!)  \arrow[d] \\
        \Map((\yoB)_! \jshr \jpb, (\yoB)_!) \arrow[r, "(\yoB)_! \,\dashv\, \yoB^*"] \&  \Map(\jshr \jpb, \yoB^* (\yoB)_!) \arrow[r, "\jshr \,\dashv\, \jpb"] \arrow[d] \& \Map(\jpb, \jpb \yoB^* (\yoB)_!)  \arrow[d] \\
        \& \Map(\jshr \jpb, \id) \arrow[r, "\jshr \,\dashv\, \jpb"'] \& \Map(\jpb, \jpb) \\
    \end{tikzcd}
}
\end{equation*}

\sloppy
\printbibliography
\fussy

\end{document}